\theoremstyle{plain}
\newtheorem{theorem}{Theorem}[section]
\newtheorem{corollary}[theorem]{Corollary}
\newtheorem{proposition}[theorem]{Proposition}
\newtheorem{lemma}[theorem]{Lemma}
\theoremstyle{definition}
\newtheorem{definition}[theorem]{Definition}
\newtheorem{remark}[theorem]{Remark}
\newtheorem{conjecture}[theorem]{Conjecture}
\newcommand{\remarkend}{\,\hfill$\clubsuit$}
\newcommand{\enm}[1]{\ensuremath{#1}}
\renewcommand{\bar}[1]{\overline{#1}}
\newcommand{\ZZ}{\enm{\mathbb{Z}}}
\newcommand{\PP}{\enm{\mathbb{P}}}
\renewcommand{\phi}{\varphi}
\renewcommand{\theta}{\vartheta}
\renewcommand{\epsilon}{\varepsilon}
\newcommand{\calI}{\mathcal{I}}
\newcommand{\calO}{\mathcal{O}}
\newcommand{\calS}{\mathcal{S}}
\newcommand{\bbP}{\mathbb{P}}
\newcommand{\bbR}{\mathbb{R}}
\DeclareMathOperator{\CI}{\mathrm{CI}}
\DeclareMathOperator{\HF}{\mathrm{HF}}
\DeclareMathOperator{\Hilb}{\mathrm{Hilb}}
\DeclareMathOperator{\red}{red}
\DeclareMathOperator{\codim}{\mathrm{codim}}
\DeclareMathOperator{\coeff}{\mathrm{coeff}}
\DeclareMathOperator{\str}{\mathrm{str}}
\DeclareMathOperator{\slrk}{\mathrm{sl.rk}}
\date{}
\title{Strength and slice rank of forms are generically equal}
\author{Edoardo Ballico}
\author{Arthur Bik}
\author{Alessandro Oneto}
\author{Emanuele Ventura}
\address[E. Ballico, A. Oneto]{Universita di Trento, Via Sommarive, 14 - 38123 Povo (Trento), Italy}
\email{edoardo.ballico@unitn.it, alessandro.oneto@unitn.it}
\address[A. Bik]{MPI for Mathematics in the Sciences, Leipzig, Germany}
\email{arthur.bik@mis.mpg.de}
\address[E. Ventura]{Universit\"at Bern, Mathematisches Institut, Sidlerstrasse 5, 3012 Bern, Switzerland}
\email{emanueleventura.sw@gmail.com, emanuele.ventura@math.unibe.ch}
\begin{document}
\begin{abstract}
We prove that strength and slice rank of homogeneous polynomials of degree $d \geq 5$ over an algebraically closed field of characteristic zero coincide generically. To show this, we establish a conjecture of Catalisano, Geramita, Gimigliano, Harbourne, Migliore, Nagel and Shin concerning dimensions of secant varieties of the varieties of reducible homogeneous polynomials. These statements were already known in degrees $2\leq d\leq 7$ and $d=9$.
\end{abstract}
\maketitle

\section{Introduction}

Ananyan and Hochster \cite{ah} introduced the notion of {\it strength} of a polynomial to solve a famous conjecture by Stillman on the existence of a uniform bound, independent on the number of variables, for the projective dimension of a homogeneous ideal of a polynomial ring. Recently, polynomial strength and related questions have been intensively investigated \cite{ah2, bbov, bv20, bde, des, ess, kz}. 

Let $\Bbbk$ be an algebraically closed field of characteristic zero, let $n\geq 1$ be an integer and let
\[
\calS = {\textstyle \bigoplus_{d\geq 0}} \calS_d := \Bbbk[x_0,\ldots,x_n]
\]
be the standard graded polynomial ring in $n+1$ variables over $\Bbbk$. So the elements of $\calS_d$ are homogeneous polynomials, also called {\it forms}, of degree $d$. Fix an integer $d\geq 2$ and let $f\in\calS_d$ be a degree-$d$ form. 

\begin{definition}
The \textit{strength} of $f$ is the minimal integer $r\geq0$ for which there exists a decomposition
\[
f = g_1\cdot h_1+\ldots + g_r\cdot h_r
\]
where $g_1,h_1,\ldots,g_r,h_r$ are forms of positive degree. We denote it by $\str(f)$.
\end{definition}

Computing the strength of a given polynomial is a very difficult task. Hence, a natural problem is to determine the strength of a general homogeneous polynomial. In \cite{BO}, A.B. and A.O. noticed that a conjectural answer to this problem was implicitly given in \cite[Remark 7.7]{cgghmns} where the authors study dimensions of secant varieties of the varieties of reducible forms. In particular, it was conjectured that the strength of a general form coincides with its slice rank; see \cite[Conjecture 1.1]{BO}. Recall that the value of the slice rank of a general form is classically known; see Remark \ref{rmk:general_slice}.

\begin{definition}
The \textit{slice rank} of $f$ is the minimal integer $r\geq0$ for which there exists a decomposition
\[
f = \ell_1\cdot h_1+\ldots + \ell_r\cdot h_r
\]
where $\ell_1,\ldots,\ell_r$ are linear forms and $h_1,\ldots,h_r$ are forms of degree $d-1$. We denote it by $\slrk(f)$.
\end{definition}
\begin{conjecture}[{\cite[Conjecture 1.1]{BO}}]\label{conj:strength} 
The strength and the slice rank of a general form in $\calS_d$ are equal.
\end{conjecture}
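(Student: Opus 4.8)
The plan is to translate the statement into the language of secant varieties and reduce it to a dimension computation. For $1 \le e \le d-1$ write $\calR_{d,e} \subseteq \PP(\calS_d)$ for the closure of the locus of forms $g\cdot h$ with $\deg g = e$, and set $\calR_d = \bigcup_{e=1}^{d-1}\calR_{d,e}$, the variety of reducible forms. A form has strength at most $r$ exactly when it is a sum of $r$ points of $\calR_d$, and slice rank at most $r$ exactly when it is a sum of $r$ points of $\calR_{d,1}$; hence the strength (resp.\ slice rank) of a general form equals the least $r$ for which the $r$-th secant variety $\sigma_r(\calR_d)$ (resp.\ $\sigma_r(\calR_{d,1})$) fills $\PP(\calS_d)$. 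Since $\calR_{d,1}\subseteq\calR_d$ gives $\sigma_r(\calR_{d,1})\subseteq\sigma_r(\calR_d)$, every form satisfies $\str(f)\le\slrk(f)$, so the generic strength is at most the generic slice rank. Equality is therefore equivalent to the assertion that the two families of secant varieties fill the ambient space at the same step.

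First I would identify the top-dimensional component of $\calR_d$. A parameter count for the multiplication map gives $\dim\calR_{d,e} = \binom{n+e}{e} + \binom{n+d-e}{d-e} - 2$, and since $e\mapsto\binom{n+e}{e}+\binom{n+d-e}{d-e}$ is convex on $[1,d-1]$ its maximum is attained at the endpoints, so $\calR_{d,1}$ (equivalently $\calR_{d,d-1}$) is a top-dimensional component, of dimension $D := n-1+\binom{n+d-1}{d-1}$. Writing $N := \dim\PP(\calS_d)$ and distributing the $r$ summands among the components, the secant variety of a reducible variety obeys $\dim\sigma_r(\calR_d)\le\min\{r(D+1)-1,\,N\}$. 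Combined with the reverse containment, the whole theorem reduces to the equality predicted by the conjecture of Catalisano et al., namely
\[
\dim\sigma_r(\calR_d) = \dim\sigma_r(\calR_{d,1})\qquad\text{for all }r,
\]
i.e.\ that the secants of the lower components $\calR_{d,e}$ with $2\le e\le d-2$ never overtake those of $\calR_{d,1}$. Note that if $\calR_{d,1}$ were non-defective this equality would be immediate by squeezing between the two bounds; the real work lies in the defective range just below filling, which is presumably why the hypothesis $d\ge5$ (with genuinely new degrees $d=8$ and $d\ge10$) is needed.

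To prove the displayed equality I would apply Terracini's lemma. At a general point $\sum_i g_ih_i$ of $\sigma_r(\calR_d)$, with $g_ih_i\in\calR_{d,e_i}$ general, the affine tangent space is $\sum_i\bigl(g_i\calS_{d-e_i}+h_i\calS_{e_i}\bigr)$; for the pure choice $e_i=1$ this becomes $(\ell_1,\dots,\ell_r)_d+\sum_i h_i\calS_1$, the sum of the degree-$d$ part of an ideal of general linear forms and the ``slice'' pieces $h_i\calS_1$. The core is then a maximal-rank statement: for general $\ell_i$ and $h_i$ this sum has the expected dimension, and no choice of $e_i\ge2$ produces a larger span. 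I would establish both by a double induction on $(n,d)$: specialize some of the $\ell_i$ and $h_i$ (forcing a common linear factor, or restricting to a hyperplane via a Castelnuovo-type exact sequence) so as to drop the number of variables or the degree, apply the inductive hypothesis together with the known behaviour of ideals generated by general forms, and conclude by semicontinuity that the generic span is at least as large.

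The main obstacle will be the defective regime immediately below the filling threshold, where the expected dimension is not forced by a crude count and one must control exactly the overlap between the ideal part $(\ell_1,\dots,\ell_r)_d$ and the slices $\sum_i h_i\calS_1$. Keeping the specialized configurations general enough to retain maximal rank while still simplifying the variable count — and separately checking the base cases and ruling out the low-degree exceptions where a lower component genuinely does overtake $\calR_{d,1}$ — is the technical heart of the argument.
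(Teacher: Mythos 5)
Your reduction to the conjecture of Catalisano et al.\ (that $\dim\sigma_r(X_{\red})=\dim\sigma_r(X_1)$ for all $r$) matches the paper, and the observation that $\str(f)\le\slrk(f)$ always holds, so that only one inequality is at stake generically, is correct. But the strategy you propose for proving that equality has a serious gap, and one side remark is wrong. The side remark first: $X_1$ is \emph{always} defective in the naive parameter-counting sense (e.g.\ for $n=d=4$ the count $r(D+1)-1$ predicts that $\sigma_2(X_1)$ fills $\bbP\calS_4$, whereas by Harris's theorem its codimension is $\binom{6}{4}-2\cdot3=9$), so the bound $\dim\sigma_r(X_{\red})\le r(D+1)-1$ is never sharp enough to squeeze anything, and there is no ``non-defective case'' in which the equality is immediate.

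More importantly, the core of your plan --- apply Terracini to get the tangent space $(g_1,h_1,\dots,g_r,h_r)_d$ and then prove a maximal-rank statement for general $g_i,h_i$ by double induction on $(n,d)$ with hyperplane specializations --- runs straight into a Fr\"oberg-type problem that is open in general and that the paper deliberately avoids. Terracini gives $\dim J_{a_1,\dots,a_r}=\dim(g_1,h_1,\dots,g_r,h_r)_d-1$, but computing the right-hand side for $2r$ general forms of these degrees is only known in restricted ranges such as $2r\le n+2$; this is precisely how the earlier partial results of Carlini--Chiantini--Geramita and of CGGHMNS were obtained, and why they stop where they do. The paper's key idea is different: it only needs an \emph{upper} bound on $\dim J_{a_1,\dots,a_r}$ for $(a_1,\ldots,a_r)\neq(1,\ldots,1)$, and it gets one by noting that the decompositions $\sum_i g_ih_i$ in which $(g_1,\dots,g_r)$ is a complete intersection are dense in the join, so the join is dominated by a projective bundle over the space $\CI_n(a_1,\dots,a_r)$ of complete intersections with fiber $\bbP(I_Y)_d$. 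Both the base (via the normal bundle $\bigoplus_i\calO_Y(a_i)$ and the Hilbert scheme) and the fiber have dimensions computable \emph{exactly} from the Koszul complex, so the upper bound is an explicit coefficient of a rational power series; the problem then becomes the purely numerical one of showing this bound is minimized only at $a_1=\dots=a_r=1$, where the actual dimension is known by Hochster--Laksov. Without this fibration trick, or some other substitute for the open maximal-rank statement, your outline does not close.
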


So far, this conjecture has been established in the following cases:  when the degree $d$ is larger than $\frac{3}{2}n+\frac{1}{2}$ \cite{Sz96:CompleteIntersections}, when twice the general slice rank is at most $n+2$ \cite{ccg} and for $d \leq 7$ and $d = 9$ \cite{BO}.

The aim of this paper is to establish  Conjecture~\ref{conj:strength}, thereby determining the strength of a general form, by proving the stronger conjecture from \cite[Remark 7.7]{cgghmns} which we also state below. 

\subsection*{Geometric formulation of the problem.}

For an integer $1\leq j\leq d/2$, we consider the \textit{variety of forms with a degree-$j$ factor} $X_j := \{[g\cdot h] \mid g\in\bbP\calS_j, h\in\bbP\calS_{d-j}\} \subseteq \bbP\calS_d$. The union of these varieties is the \textit{variety of reducible forms} $X_{\red} := {\textstyle \bigcup_{j=1}^{\left\lfloor d/2 \right\rfloor}} X_j$. For an integer $r\geq 1$, the $r$th \textit{secant variety} of $X_{\red}$ is the Zariski-closure
\[
\sigma_r(X_{\red}) := \overline{\left\{ [f] \in \bbP\calS_d \,\middle|\, f = f_1 +\ldots+ f_r,~ [f_1],\ldots,[f_r] \in X_{\red}\right\}}
\]
of the union of all linear spaces spanned by $r$ points on $X_{\rm red}$. Since $X_{\rm red}$ is reducible, we can describe its $r$th secant variety as
\[
\sigma_r(X_{\red}) = \bigcup_{1\leq a_1,\ldots,a_r \leq \left\lfloor d/2 \right\rfloor} J_{a_1,\ldots,a_r}\vspace{-6pt}
\]
where
\[
J_{a_1,\ldots,a_r} := J(X_{a_1},\ldots,X_{a_r}) = \overline{\left\{ [f] \in \bbP\calS_d \,\middle|\, f =f_1 +\ldots+ f_r,~ [f_1]\in X_{a_1},\ldots,[f_r]\in X_{a_r}\right\}}
\]
is the \textit{join} of the varieties $X_{a_1},\ldots,X_{a_r}$. Now, the general slice rank and strength are
\[
\slrk_{d,n}^\circ := \min\{r\in\ZZ_{\geq0} \mid \sigma_r(X_1) = \bbP\calS_d\} \quad \text{ and } \quad \str_{d,n}^\circ := \min\{r\in\ZZ_{\geq0} \mid \sigma_r(X_{\red}) = \bbP\calS_d\}.
\]
So Conjecture \ref{conj:strength} is implied by the following stronger conjecture.

\begin{conjecture}[{\cite[Remark 7.7]{cgghmns}}]\label{conj:dimensions}
For each integer $r\geq1$, we have $\dim \sigma_r(X_{\red}) = \dim \sigma_r(X_1)$.
\end{conjecture}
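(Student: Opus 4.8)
The plan is to prove the nontrivial inequality $\dim\sigma_r(X_{\red}) \le \dim\sigma_r(X_1)$; the reverse follows at once from $X_1\subseteq X_{\red}$. Since $\sigma_r(X_{\red}) = \bigcup_{a_1,\ldots,a_r} J_{a_1,\ldots,a_r}$ is a \emph{finite} union, its dimension is the maximum of the $\dim J_{a_1,\ldots,a_r}$, and $J_{1,\ldots,1} = \sigma_r(X_1)$. Hence it suffices to show, for every tuple $1\le a_1,\ldots,a_r\le\lfloor d/2\rfloor$, that
\[
\dim J_{a_1,\ldots,a_r} \le \dim \sigma_r(X_1).
\]

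First I would compute tangent spaces. Each $X_a$ is irreducible, being the image of $\bbP\calS_a\times\bbP\calS_{d-a}$, and at a general point $[g\cdot h]\in X_a$ the affine tangent space is $g\calS_{d-a}+h\calS_a$. A general point of $X_a$ corresponds to independent general forms $g,h$ of degrees $a,d-a$, so Terracini's lemma identifies the affine tangent space to $J_{a_1,\ldots,a_r}$ at a general point with $\sum_{i=1}^r\bigl(g_i\calS_{d-a_i}+h_i\calS_{a_i}\bigr)$, where $g_1,\ldots,g_r,h_1,\ldots,h_r$ are $2r$ independent general forms of degrees $a_1,\ldots,a_r,d-a_1,\ldots,d-a_r$. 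This subspace is exactly the degree-$d$ component $I_d$ of the ideal $I=(g_1,\ldots,g_r,h_1,\ldots,h_r)$, so
\[
\dim J_{a_1,\ldots,a_r}+1 = \dim_\Bbbk I_d,
\]
which is the generic, hence maximal, value of $\dim_\Bbbk I_d$ over forms of the given degrees. The statement thus becomes purely one about generic ideals: among all degree multisets $\{a_i,d-a_i\}_{i=1}^r$, the generic $\dim_\Bbbk I_d$ is maximized by the extreme choice $(1,\ldots,1,d-1,\ldots,d-1)$.

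I would prove this by moving one pair at a time, replacing $(a_i,d-a_i)$ with $(a_i-1,d-a_i+1)$ for some $a_i\ge2$ and showing $\dim_\Bbbk I_d$ does not decrease; iterating drives every $a_i$ down to $1$. For $r=1$ this is exactly the convexity statement $\phi(1)\ge\phi(2)\ge\cdots\ge\phi(\lfloor d/2\rfloor)$ for $\phi(a):=\binom{n+a}{a}+\binom{n+d-a}{d-a}$, since then $I_d=g\calS_{d-a}+h\calS_a$ has dimension $\phi(a)-1$, the only overlap being $\Bbbk gh$. The essential difficulty for $r\ge2$ is that the single move $(a,d-a)\to(a-1,d-a+1)$ simultaneously \emph{enlarges} the $g$-contribution (a lower-degree $g'$ gives $\dim g'\calS_{d-a+1}>\dim g\calS_{d-a}$) and \emph{shrinks} the $h$-contribution, so the inequality cannot follow from a naive containment and instead hinges on controlling how these pieces overlap one another and the remaining generators.

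The main obstacle is precisely this overlap control, which amounts to understanding the Hilbert function in degree $d$ of an ideal generated by $2r$ general forms — a case of Fröberg's conjecture that is open in general. I would attack it by relating the two configurations through multiplication by a general linear form and invoking a Lefschetz-type (maximal-rank) property of the relevant generic Artinian algebras, or alternatively through a tailored flat degeneration of the extreme configuration whose degree-$d$ ideal provably contains a generic narrower one. Establishing the required rank comparison uniformly in all degrees $d$ is the crux that pushes past the previously known range $d\le7$, $d=9$.
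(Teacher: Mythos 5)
Your reduction to $\dim J_{a_1,\ldots,a_r}\le\dim\sigma_r(X_1)$ and the Terracini computation $\dim J_{a_1,\ldots,a_r}=\dim_\Bbbk(g_1,\ldots,g_r,h_1,\ldots,h_r)_d-1$ both match the paper. The gap is exactly at what you yourself call the crux: you reformulate the problem as comparing the generic value of $\dim_\Bbbk I_d$ for an ideal generated by $2r$ general forms across the various degree multisets, and that value is governed by Fr\"oberg's conjecture, which is open precisely in the regime $2r>n+1$ that one must reach to go beyond the earlier results of Carlini--Chiantini--Geramita and Catalisano et al. The repairs you sketch (a Lefschetz/maximal-rank property for the relevant generic Artinian algebras, or a flat degeneration of the extreme configuration whose degree-$d$ part contains a generic narrower one) are not carried out and are not known in the required generality, so the argument does not close. (Your $r=1$ convexity computation is fine, but it is the only case where the overlap analysis is elementary.)

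The paper sidesteps this obstruction by never computing the Hilbert function of the full $2r$-generator ideal. It bounds $\dim J_{a_1,\ldots,a_r}$ from above by a parameter count: the subset of forms $f=\sum_i g_ih_i$ with $(g_1,\ldots,g_r)$ a complete intersection is dense in $J_{a_1,\ldots,a_r}$ and is the image of a projective bundle over the space $\CI_n(a_1,\ldots,a_r)$ of such complete intersections, with fiber $\bbP(g_1,\ldots,g_r)_d$. Both ingredients are computable in closed form: the fiber dimension comes from the Koszul resolution (only the $r\le n$ low-degree factors are involved, so there is no Fr\"oberg issue), and $\dim\CI_n(a_1,\ldots,a_r)=h^0(N_{Y/\bbP^n})$ comes from deformation theory. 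A purely numerical analysis then shows this explicit upper bound is strictly below $\dim\sigma_r(X_1)$ unless $a_1=\cdots=a_r=1$, in which case the bound is attained and equals the exact value given by Hochster--Laksov (after quotienting by the $r$ linear forms one is left with $r$ general forms of degree $d-1$ in $n+1-r$ variables). To salvage your route you would need to replace the unavailable generic Hilbert-function comparison with a coarser bound of this kind.
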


\begin{remark}\label{rmk:general_slice}
Recall that the value of the general slice rank is classically known as it equals the minimal codimension of a linear space contained in a general hypersurface. If $d\geq 3$, then we have 
\[
\slrk^\circ_{d,n} := \min\left\{ r\in\ZZ_{\geq0} \,\middle|\, r(n+1-r) \geq \binom{n-r+d}{d}\right\}\vspace{-6pt}
\]
and 
\[
\codim_{\PP\calS_d}\sigma_r(X_1)=\binom{n-r+d}{d}-r(n+1-r)
\]
for all integers $1\leq r<\slrk^\circ_{d,n}$ by \cite[Theorem~12.8]{Harris:First}. Note that $\slrk^\circ_{d,n} \leq n$. So we can (and often will) relax the assumption $r < \slrk_{d,n}^\circ$ to $r < n$.
\remarkend
\end{remark}

The classical approach to computing dimensions of secant and join varieties is via \textit{Terracini's Lemma} \cite{ter} which asserts that, if $Y_1,\ldots,Y_r \subseteq \bbP^N$ are projective varieties, $q_1\in Y_1,\ldots, q_r\in Y_r$ are general points and $p \in \langle q_1,\ldots,q_r\rangle$ is general, then
\[
T_p\sigma_r J(Y_1,\ldots,Y_r) = \langle T_{q_1}Y_1,\ldots,T_{q_r}Y_r \rangle;
\]
see e.g. \cite[Lemma 1]{guide} for a recent presentation. By direct computation, it is easy to observe that the tangent space to $X_a$ at a general point $[g\cdot h]$, with $\deg(g) = a$ and $\deg(h) = d-a$, is given by $\bbP(g,h)_d$ where $(g,h)_d := (g,h) \cap \calS_d$ is the degree-$d$ homogeneous part of the ideal generated by $g$ and $h$. Therefore
\begin{equation}\label{eq:tangent_join}
\dim J_{a_1,\ldots,a_r} = \dim (g_1,h_1,\ldots,g_r,h_r)_d -1,
\end{equation}
where $g_i,h_i$ are general forms with $\deg(g_i) = a_i$ and $\deg(h_i) = d-a_i$. The codimensions of the homogeneous parts of a homogeneous ideal are encoded in its \textit{Hilbert function}, whose generating power series is called the \textit{Hilbert series}. These are among the most studied algebraic invariants of a homogeneous ideal. The Hilbert series of an ideal generated by general forms is prescribed by Fr\"oberg's famous conjecture; see~\cite{fro}. In \cite[Theorem 5.1]{ccg}, the authors used the known cases of Fr\"oberg's conjecture to deduce the integers $d,n,r,a_1,\ldots,a_r$ with $2r \leq n+2$ for which $J_{a_1,\ldots,a_r} = \bbP\calS_d$. Similarly, in \cite[Theorem~7.4]{cgghmns}, the authors showed that Conjecture \ref{conj:dimensions} holds if $2r \leq n+1$. The strength of the general form corresponds to the minimal codimension of a complete intersection inside a general hypersurface. This is the perspective of \cite[Corollary A]{Sz96:CompleteIntersections}, where the author shows that Conjecture~\ref{conj:strength} holds if $d \geq \frac{3}{2}n+\frac{1}{2}$. 

In \cite{BO}, A.B. and A.O. proved the following results.

\begin{theorem}
Let $d\in\{3,4,5,6,7,9\}$ and $n,r\geq 1$ be integers such that $r<\slrk_{d,n}^\circ$. Then Conjecture \ref{conj:dimensions} holds. Furthermore, unless $(d,n,r) = (4,3,2)$, the subvariety $\sigma_r(X_1)$ is the unique component of $\sigma_r(X_{\red})$ of maximal dimension. If $(d,n,r) = (4,3,2)$, the codimensions of $\sigma_r(X_1)$, $J(X_1,X_2)$ and $\sigma_r(X_2)$ each equal~$1$.
\end{theorem}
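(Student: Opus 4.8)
The plan is to translate the statement into a comparison of Hilbert functions of ideals generated by general forms, and then to settle that comparison using the instances of Fr\"oberg's conjecture available for these degrees. First, by Terracini's Lemma and the dimension formula (\ref{eq:tangent_join}), I would replace each join $J_{a_1,\ldots,a_r}$ by the quantity $\dim(g_1,h_1,\ldots,g_r,h_r)_d - 1$, where $g_i,h_i$ are general forms of degrees $a_i$ and $d-a_i$. Since $X_1\subseteq X_{\red}$ gives $\sigma_r(X_1)\subseteq\sigma_r(X_{\red})$, the inequality $\dim\sigma_r(X_1)\leq\dim\sigma_r(X_{\red})$ is free; the content is the reverse inequality together with the identification of the top-dimensional component. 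Because $\sigma_r(X_{\red})=\bigcup_{(a_1,\ldots,a_r)}J_{a_1,\ldots,a_r}$ is a finite union, it suffices to prove, for every tuple $(a_1,\ldots,a_r)$ with $1\leq a_i\leq\lfloor d/2\rfloor$, that
\[
\dim(g_1,h_1,\ldots,g_r,h_r)_d \leq \dim(\ell_1,h_1',\ldots,\ell_r,h_r')_d,
\]
where the right-hand side corresponds to $J_{1,\ldots,1}=\sigma_r(X_1)$ (general linear forms $\ell_i$ and general degree-$(d-1)$ forms $h_i'$), with strict inequality away from the exceptional locus.

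For the right-hand side I would first pin down the exact value. Reducing modulo the general linear forms $\ell_1,\ldots,\ell_r$ identifies $\calS/(\ell_1,\ldots,\ell_r)$ with a polynomial ring in $n+1-r$ variables, in which $h_1',\ldots,h_r'$ become $r$ general forms of the common degree $d-1$. Since $d=(d-1)+1$ is one more than that generator degree, the Hilbert function in this degree is governed by the Hochster--Laksov theorem, giving
\[
\dim(\ell_1,h_1',\ldots,\ell_r,h_r')_d = \binom{n+d}{d}-\binom{n+d-r}{d}+\min\left\{\binom{n+d-r}{d},\,r(n+1-r)\right\}.
\]
Under the hypothesis $r<\slrk_{d,n}^\circ$ the minimum equals $r(n+1-r)$, which recovers the codimension formula of Remark \ref{rmk:general_slice}.

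For an arbitrary tuple I would invoke Fr\"oberg's conjecture, which is known for $d\in\{3,4,5,6,7,9\}$, to express $\dim(g_1,h_1,\ldots,g_r,h_r)_d$ as $\binom{n+d}{d}$ minus the truncated coefficient of $t^d$ in $\prod_{i=1}^r(1-t^{a_i})(1-t^{d-a_i})/(1-t)^{n+1}$. The heart of the argument is then a purely combinatorial inequality: after applying the truncation $[\,\cdot\,]_+$, the coefficient of $t^d$ should be smallest exactly when every $a_i=1$. Expanding the numerator and keeping only the terms of degree $\leq d$ makes the dependence on the partition $(a_1,\ldots,a_r)$ explicit, and a convexity estimate for the binomial coefficients $\binom{n+k}{n}$ shows that replacing a factor $(1-t^{a})(1-t^{d-a})$ by $(1-t)(1-t^{d-1})$ never decreases the codimension, so that the configuration $(1,\ldots,1)$ is optimal.

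The main obstacle is precisely this combinatorial comparison, for two reasons. First, Fr\"oberg's truncation must be handled with care: one has to confirm that the predicted Hilbert series is strictly positive in every degree below $d$, so that the coefficient of $t^d$, rather than $0$, is the relevant quantity; this is where both the bound $r<\slrk_{d,n}^\circ$ and the availability of Fr\"oberg's conjecture for the listed degrees enter, and it is this availability that confines the theorem to $d\in\{3,4,5,6,7,9\}$. Second, tracking exactly when the inequality is an equality is delicate and produces the single exceptional triple $(d,n,r)=(4,3,2)$: there one checks directly that the configurations $(1,1)$, $(1,2)$ and $(2,2)$ all yield $\HF(\calS/I,4)=1$, so $\sigma_r(X_1)$, $J(X_1,X_2)$ and $\sigma_r(X_2)$ each fill a hyperplane and no component dominates. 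For every other $(d,n,r)$ in the range I would verify that the inequality is strict unless $(a_1,\ldots,a_r)=(1,\ldots,1)$, which yields that $\sigma_r(X_1)$ is the unique component of $\sigma_r(X_{\red})$ of maximal dimension.
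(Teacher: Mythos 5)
This statement is quoted from \cite{BO}; the present paper does not reprove it, but instead establishes the $d\geq 5$ analogue (Theorem \ref{thm:main}) by a deliberately different route. Your plan --- Terracini and \eqref{eq:tangent_join} reduce everything to the degree-$d$ piece of the ideal $(g_1,h_1,\ldots,g_r,h_r)$ of $2r$ general forms; the tuple $(1,\ldots,1)$ is computed exactly via \cite{hl} after reducing modulo the linear forms; the triple $(4,3,2)$ is checked by hand --- is indeed the strategy of \cite{ccg}, \cite{cgghmns} and \cite{BO}. Your three Hilbert function computations in the exceptional case are correct (and all three ideals there are complete intersections, so no conjecture is needed for them), and your treatment of $\sigma_r(X_1)$ matches what the paper itself does inside Theorem \ref{thm:upper_bound}.

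The genuine gap is the step ``invoke Fr\"oberg's conjecture, which is known for $d\in\{3,4,5,6,7,9\}$.'' Fr\"oberg's conjecture is not known ``in degree $d$'' for these values: the known cases are indexed by the number of variables, the number of generators, or the degree relative to the generating degrees (as in Hochster--Laksov), not by $d$ alone. Here the ideal has $2r$ generators, with $2r$ as large as $2n-2$, so the few-generators cases do not apply, and what you actually need is the inequality
\[
\HF_{\calS/I}(d)\;\geq\;\coeff_d\!\left(\frac{\prod_{i=1}^r(1-t^{a_i})(1-t^{d-a_i})}{(1-t)^{n+1}}\right),
\]
i.e.\ an \emph{upper} bound on $\dim I_d$ by its expected value. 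This coefficientwise bound is precisely the hard half of Fr\"oberg's conjecture for these systems; it cannot be cited as a black box, and identifying exactly which instances are available is the actual content of the proof in \cite{BO} --- it is why $d=8$ is excluded while $d=9$ is not, a distinction your argument cannot reproduce. Your proposal does not carry out that case analysis, so the central inequality is unsupported. Note that Lemma \ref{lemma:bound1} and Proposition \ref{prop:CI} of the present paper exist precisely to sidestep this obstruction: they obtain an unconditional upper bound by fibering $J^\circ_{a_1,\ldots,a_r}$ over the space of complete intersections $\CI_n(a_1,\ldots,a_r)$, whose Hilbert functions are known exactly (at the price of the correction term $\binom{\ell_{d/2}}{2}$ in Theorem \ref{thm:upper_bound}). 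Finally, the concluding ``convexity estimate'' showing that $(1,\ldots,1)$ is optimal is asserted rather than proved; Section \ref{sec:proof} shows that this minimization is genuinely delicate (it occupies Theorems \ref{thm:theta>2} and \ref{thm:12->11}, including a computer verification for small parameters), so it cannot be waved through in one sentence.
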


\begin{corollary}
When $d \leq 7$ and $d=9$, the general form of $\calS_d$ has strength equal to its slice rank.
\end{corollary}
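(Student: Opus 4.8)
The plan is to derive the Corollary formally from the theorem above, whose only real content is the dimension statement it records. I begin with the elementary observation that $\str(f)\le\slrk(f)$ for every form $f$: a slice-rank decomposition $f=\ell_1 h_1+\cdots+\ell_r h_r$ is in particular a strength decomposition, so $X_1\subseteq X_{\red}$ and hence $\sigma_r(X_1)\subseteq\sigma_r(X_{\red})$ for all $r$, which gives $\str^\circ_{d,n}\le\slrk^\circ_{d,n}$. I then record the standard fact that the slice rank and the strength of a general form equal the generic values $\slrk^\circ_{d,n}$ and $\str^\circ_{d,n}$, the only point needing care being that a join $J_{a_1,\dots,a_r}$ equal to $\bbP\calS_d$ is the closure of the image of a dominant map from an irreducible variety, so its image contains a dense open subset and a general form is genuinely (not merely approximately) decomposable. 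It therefore suffices to prove $\str^\circ_{d,n}=\slrk^\circ_{d,n}$.

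Set $s:=\slrk^\circ_{d,n}$. The inequality $\str^\circ_{d,n}\le s$ follows from $\sigma_s(X_{\red})\supseteq\sigma_s(X_1)=\bbP\calS_d$. For the reverse inequality I apply the theorem above with $r=s-1$: since $s-1<\slrk^\circ_{d,n}$ and $d\in\{3,4,5,6,7,9\}$, Conjecture~\ref{conj:dimensions} holds for this $r$, so $\dim\sigma_{s-1}(X_{\red})=\dim\sigma_{s-1}(X_1)$. By minimality of $s$ we have $\sigma_{s-1}(X_1)\subsetneq\bbP\calS_d$, whence $\dim\sigma_{s-1}(X_{\red})=\dim\sigma_{s-1}(X_1)<\dim\bbP\calS_d$; thus $\sigma_{s-1}(X_{\red})\ne\bbP\calS_d$ and $\str^\circ_{d,n}\ge s$. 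Combining the two bounds gives $\str^\circ_{d,n}=s=\slrk^\circ_{d,n}$, so the strength and slice rank of a general form coincide.

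It remains to treat the cases outside the scope of the theorem. When $s\le 1$ the argument above is vacuous, but then $\str^\circ_{d,n}=\slrk^\circ_{d,n}=1$ holds trivially for a nonzero general form; so the theorem is invoked only when $s\ge 2$, where $r=s-1\ge 1$ as its hypotheses require. The degree $d=2$ lies in the range $d\le 7$ yet is excluded from the theorem, but here the statement is immediate: a product of two positive-degree forms of total degree $2$ consists of two linear factors, so $X_{\red}=X_1$ and $\str=\slrk$ identically. The only genuine input is therefore the dimension equality $\dim\sigma_r(X_{\red})=\dim\sigma_r(X_1)$ furnished by the theorem above, and I expect that computation---not the short formal deduction above---to be the main obstacle.
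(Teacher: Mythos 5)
Your deduction is correct and is exactly the (standard) argument the paper leaves implicit: the corollary is stated in the paper without proof, the intended reasoning being precisely that the dimension equality at $r=\slrk^\circ_{d,n}-1$ forces $\sigma_{r}(X_{\red})\subsetneq\bbP\calS_d$ and hence $\str^\circ_{d,n}=\slrk^\circ_{d,n}$. Your care with the closure/genericity issue and with the edge cases $s\le 1$ and $d=2$ is appropriate and introduces no gaps.
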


\begin{samepage}
The main results of this paper are the following complementing theorem and corollary.

\begin{theorem}\label{thm:main}
Let $d\geq 5$ and $n,r\geq 1$ be integers such that $r<\slrk_{d,n}^\circ$. Then Conjecture \ref{conj:dimensions} holds. Furthermore, the subvariety $\sigma_r(X_1)$ is the unique component of $\sigma_r(X_{\red})$ of maximal dimension.
\end{theorem}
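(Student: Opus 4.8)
The plan is to use Terracini's Lemma to turn the statement into a comparison of Hilbert functions in degree $d$, and then to prove that the all-ones degree sequence is the unique maximiser. By the decomposition of $\sigma_r(X_{\red})$ into the joins $J_{a_1,\ldots,a_r}$ and by \eqref{eq:tangent_join}, it suffices to show that for every tuple $(a_1,\ldots,a_r)$ with $1\le a_i\le\lfloor d/2\rfloor$ one has
\[
\dim(g_1,h_1,\ldots,g_r,h_r)_d\ \le\ \binom{n+d}{d}-\binom{n-r+d}{d}+r(n+1-r),
\]
where $g_i,h_i$ are general of degrees $a_i,d-a_i$, and that the inequality is strict unless all $a_i=1$. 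The right-hand side is exactly $\dim\sigma_r(X_1)+1$: eliminating the $r$ general linear forms in the all-ones ideal leaves $r$ general forms of degree $d-1$ in $n+1-r$ variables, so its value follows from Remark~\ref{rmk:general_slice}. Thus everything reduces to a lower bound on the Hilbert function $H_{\calS/I}(d)$ of the quotient by $I=(g_1,h_1,\ldots,g_r,h_r)$, the one case where this bound is attained being the all-ones sequence; the strictness will in turn give that $\sigma_r(X_1)$ is the unique maximal-dimensional component.

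Next I would reduce to a single exchange. Using the symmetry of $J_{a_1,\ldots,a_r}$ in its indices, it is enough to fix a background ideal $J=(g_1,h_1,\ldots,g_{r-1},h_{r-1})$ of general forms and to show that replacing a final pair $(g,h)$ of degrees $(a,d-a)$, $a\ge2$, by a pair $(\ell,k)$ of degrees $(1,d-1)$ strictly increases $\dim(J+(\,\cdot\,))_d$. Writing $A=\calS/J$, the contribution of a pair of degrees $(a,d-a)$ is $\dim\big(gA_{d-a}+hA_a\big)$, the dimension of the image in $A_d$ of multiplication by the two new general forms. The case $r=1$ already exhibits the mechanism: there $A=\calS$, the two multiplications are injective and meet only in $\langle gh\rangle$, so the contribution equals $\binom{n+d-a}{n}+\binom{n+a}{n}-1$, and an elementary binomial computation shows that this is strictly decreasing for $1\le a<(d-1)/2$ and that its value at $a=1$ strictly exceeds its value at $a=\lfloor d/2\rfloor$. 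Hence $a=1$ is the unique maximiser, confirming that a lower-degree split fills $A_d$ more efficiently.

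The main obstacle is the exchange step for general $r$, where the two new general forms no longer act injectively and the overlaps among $gA_{d-a}$, $hA_a$ and $J_d$ must be controlled. Once $2r>n+1$ the $2r$ general generators of $I$ no longer form a regular sequence, so this is precisely the range governed by Fr\"oberg's conjecture, which is open; the difficulty is to establish the inequality without computing $H_{\calS/I}(d)$ exactly. My plan is to prove the exchange by an induction that lowers the invariants of the problem: restricting to a general hyperplane $\{x_n=0\}$ — equivalently, passing to the first difference of the Hilbert function under a general linear form — relates $H_{\calS/I}(d)$ in $n+1$ variables to the corresponding quantity in $n$ variables, while a reduction of the degree from $d$ to $d-2$ brings the degree down with matching parity. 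The induction would bottom out at the degrees $d\le 7$ and $d=9$ treated in \cite{BO} and at the few-variable cases where Fr\"oberg's conjecture is known, with the exact all-ones value from Remark~\ref{rmk:general_slice} serving as the comparison target at every stage. The hardest point, where a genuinely new input is needed, is to show that the maximal-rank behaviour of multiplication by general forms persists through the exchange in the non-regular-sequence range; this is also where the hypothesis $d\ge5$ should enter.
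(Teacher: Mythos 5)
There is a genuine gap, and you have located it yourself: your plan reduces the whole theorem to controlling $\dim(g_1,h_1,\ldots,g_r,h_r)_d$ for $2r$ general forms, which in the range $2r>n+1$ is exactly the open part of Fr\"oberg's conjecture, and your proposed resolution (an exchange step plus an induction on hyperplane sections and on the degree) is only sketched; the closing sentence that ``a genuinely new input is needed'' to preserve maximal-rank behaviour through the exchange concedes that the central step is not proved. The hyperplane-section induction is essentially the standard (and so far unsuccessful) attack on Fr\"oberg's conjecture, so there is no reason to expect it to close here, and the bottoming-out at $d\le 7$, $d=9$ does not help since the induction reduces $n$ and $d$ simultaneously while the difficulty persists for all large $n$ at fixed $d$.

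The paper's key idea, which your proposal is missing, is to \emph{not} compute or bound the Hilbert function of the ideal generated by all $2r$ forms. Instead, after observing that the locus $J^\circ_{a_1,\ldots,a_r}$ where $(g_1,\ldots,g_r)$ is a complete intersection is dense in $J_{a_1,\ldots,a_r}$ (Lemma~\ref{lemma:dense}), one parametrizes $J^\circ_{a_1,\ldots,a_r}$ by a projective bundle over $\CI_n(a_1,\ldots,a_r)$ whose fiber over $Y$ is $\bbP(I_Y)_d$ (Lemma~\ref{lemma:bound1}). Only the $r$ low-degree factors enter, and since $r<n$ these \emph{do} form a regular sequence, so both the fiber dimension (Koszul complex) and the base dimension (global sections of the split normal bundle, Proposition~\ref{prop:CI}) are computed unconditionally. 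This yields the upper bound of Theorem~\ref{thm:upper_bound}, which happens to coincide with the Fr\"oberg prediction up to the correction term $\binom{\ell_{d/2}}{2}$, with equality in the all-ones case supplied by Hochster--Laksov exactly as in your second paragraph. The remaining work is then purely numerical (Section~\ref{sec:proof}), and it is organized much as you suggest --- first reduce $\max a_i$ to $2$, then handle $a_i\in\{1,2\}$ --- but applied to the explicit coefficient expression $F(a_1,\ldots,a_r)$ rather than to an unknown Hilbert function. Without the complete-intersection parametrization, or some substitute for it, your argument cannot be completed by currently known methods.
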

\end{samepage}

\begin{corollary}
The general form of $\calS_d$ has strength equal to its slice rank.
\end{corollary}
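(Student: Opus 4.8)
The plan is to deduce the statement formally from the dimension equality established in Theorem~\ref{thm:main}. By construction the strength and slice rank of a general form are the geometric minima
\[
\str^\circ_{d,n} = \min\{r \in \ZZ_{\geq 0} \mid \sigma_r(X_{\red}) = \bbP\calS_d\}, \qquad \slrk^\circ_{d,n} = \min\{r \in \ZZ_{\geq 0} \mid \sigma_r(X_1) = \bbP\calS_d\},
\]
where the identification uses that the set of forms of strength (resp.\ slice rank) at most $r$ is the constructible image of a join map whose closure is $\sigma_r(X_{\red})$ (resp.\ $\sigma_r(X_1)$); once this closure equals $\bbP\calS_d$, the image contains a dense open subset, so the general form attains the bound. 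It therefore suffices to prove $\str^\circ_{d,n} = \slrk^\circ_{d,n}$. Since $X_1 \subseteq X_{\red}$ gives $\sigma_r(X_1) \subseteq \sigma_r(X_{\red})$ for every $r$, the inequality $\str^\circ_{d,n} \leq \slrk^\circ_{d,n}$ is immediate.

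For the reverse inequality I would fix an integer $r$ with $r < \slrk^\circ_{d,n}$ and show that $\sigma_r(X_{\red})$ is a proper subvariety of $\bbP\calS_d$. By minimality of $\slrk^\circ_{d,n}$ the closed set $\sigma_r(X_1)$ is properly contained in the irreducible variety $\bbP\calS_d$, so $\dim \sigma_r(X_1) < \dim \bbP\calS_d$. Theorem~\ref{thm:main} yields $\dim \sigma_r(X_{\red}) = \dim \sigma_r(X_1)$, whence $\dim \sigma_r(X_{\red}) < \dim \bbP\calS_d$ and in particular $\sigma_r(X_{\red}) \neq \bbP\calS_d$. Thus no $r < \slrk^\circ_{d,n}$ can fill the ambient space, giving $\str^\circ_{d,n} \geq \slrk^\circ_{d,n}$ and hence equality. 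This proves the corollary for all $d \geq 5$; the remaining degrees $2 \leq d \leq 4$ are already covered by the known cases $d \leq 7$ of \cite{BO}, and together these ranges exhaust every admissible degree $d \geq 2$.

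I expect no genuine obstacle in this deduction: all of the difficulty is absorbed into Theorem~\ref{thm:main}, and what remains is formal. The only two points that deserve a line of justification are the strict dimension drop $\dim \sigma_r(X_1) < \dim \bbP\calS_d$ for $r < \slrk^\circ_{d,n}$, which is exactly the content of the definition of $\slrk^\circ_{d,n}$ together with the irreducibility of $\bbP\calS_d$, and the genericity dictionary relating the algebraic invariants $\str$ and $\slrk$ to the geometric thresholds $\str^\circ_{d,n}$ and $\slrk^\circ_{d,n}$.
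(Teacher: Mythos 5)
Your deduction is correct and is exactly the argument the paper intends (the paper leaves the corollary without explicit proof, relying on the geometric dictionary $\str^\circ_{d,n}=\min\{r \mid \sigma_r(X_{\red})=\bbP\calS_d\}$, $\slrk^\circ_{d,n}=\min\{r \mid \sigma_r(X_1)=\bbP\calS_d\}$ set up in the introduction, the inclusion $\sigma_r(X_1)\subseteq\sigma_r(X_{\red})$, and the dimension equality of Theorem~\ref{thm:main} for $r<\slrk^\circ_{d,n}$). Your handling of the genericity of constructible images and of the low degrees $2\leq d\leq 4$ via the earlier corollary is also consistent with the paper.
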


\subsection*{Structure of the paper}

In Section \ref{sec:upperbound}, we find a numerical upper bound for the dimension of $J_{a_1,\ldots,a_r}$, which is an equality for $a_1=\ldots= a_r=1$ and $r<\slrk_{d,n}^{\circ}$. In Section \ref{sec:proof}, we study this upper bound as $a_1,\ldots,a_r$ vary and prove the main result.

\subsection*{Acknowledgements}
E.B. is partially supported by MIUR and GNSAGA of INdAM (Italy). E.V. is supported by Vici Grant 639.033.514 of Jan Draisma from the  Netherlands Organisation for Scientific Research.

\section{An upper bound on the dimensions}\label{sec:upperbound}

Let $d,n\geq 2$, $r<n$ and $a_1,\ldots,a_r \leq d/2$ be positive integers. We consider the subset  
\[
J^\circ_{a_1,\ldots,a_r} := \left\{ f\in\bbP\calS_d \,\middle|\, f = \sum_{i=1}^r g_i\cdot h_i, \quad g_i \in \calS_{a_i}, h_i \in \calS_{d-a_i}, \quad (g_1,\ldots,g_r)\text{ is a complete intersection}\right\}
\]
of $J_{a_1,\ldots,a_r}$. Let $\CI_n(a_1,\ldots,a_r)$ be the set of complete intersections in $\bbP^n$ of codimension $r$ defined by the intersection of hypersurfaces of degrees $a_1,\ldots,a_r$. 

In order to give an upper bound on the dimension of $J_{a_1,\ldots,a_r}$, we first observe that the subset $J^\circ_{a_1,\ldots,a_r}$ is dense and then we bound the dimension of this subset by parametrizing it via the space of complete intersections $\CI_n(a_1,\ldots,a_n)$ whose dimension can be computed explicitely.

\begin{samepage}
\begin{lemma}\label{lemma:dense}
The subset $J^\circ_{a_1,\ldots,a_r}$ is dense in $J_{a_1,\ldots,a_r}$.
\end{lemma}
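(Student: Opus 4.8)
The plan is to realize $J^\circ_{a_1,\ldots,a_r}$ as the image of a dense open subset of the parameter space under the natural parametrization of $J_{a_1,\ldots,a_r}$, and then to invoke the elementary fact that a morphism carries the closure of a set into the closure of its image.

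First I would set up the parametrization. Consider the affine space $D := \calS_{a_1} \times \calS_{d-a_1} \times \cdots \times \calS_{a_r} \times \calS_{d-a_r}$ and the morphism $\Phi : D \to \calS_d$ sending $(g_1, h_1, \ldots, g_r, h_r) \mapsto \sum_{i=1}^r g_i h_i$. Since $\Phi(D)$ is a cone, the affine cone over $J_{a_1,\ldots,a_r}$ is exactly $\overline{\Phi(D)}$, while the affine cone over $J^\circ_{a_1,\ldots,a_r}$ is $\Phi(U)$, where $U \subseteq D$ is the locus on which $(g_1,\ldots,g_r)$ is a complete intersection. As this condition concerns only the factors $\calS_{a_1}, \ldots, \calS_{a_r}$, we have $U = C \times \calS_{d-a_1} \times \cdots \times \calS_{d-a_r}$ with $C \subseteq \calS_{a_1} \times \cdots \times \calS_{a_r}$ the complete intersection locus. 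It therefore suffices to prove $\overline{\Phi(U)} = \overline{\Phi(D)}$ and then projectivize.

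Second, I would show $U$ is dense open in $D$, for which it is enough that $C$ is dense open in the irreducible space $\calS_{a_1} \times \cdots \times \calS_{a_r}$. For openness, introduce the incidence variety $\Sigma = \{((g_i)_i, x) : g_i(x) = 0 \text{ for all } i\} \subseteq (\calS_{a_1} \times \cdots \times \calS_{a_r}) \times \bbP^n$ with projection $p$ to the first factor. Since $\bbP^n$ is proper, upper semicontinuity of fiber dimension shows that $\{(g_i)_i : \dim \VV(g_1,\ldots,g_r) \geq n-r+1\}$ is closed; because $\calS$ is Cohen--Macaulay, its complement---where $\VV(g_1,\ldots,g_r)$ has the expected codimension $r$---is precisely the locus $C$ where $(g_1, \ldots, g_r)$ is a regular sequence, hence a complete intersection. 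For nonemptiness, note that since $r < n$ the monomials $x_0^{a_1}, x_1^{a_2}, \ldots, x_{r-1}^{a_r}$ involve distinct variables and thus form a regular sequence, giving a point of $C$. An irreducible variety minus a proper closed subset is dense, so $C$, and hence $U$, is dense open.

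Finally, I would conclude by the topological step: since $U$ is dense in $D$ and $\Phi$ is continuous, $\Phi(D) = \Phi(\overline{U}) \subseteq \overline{\Phi(U)}$, whence $\overline{\Phi(D)} \subseteq \overline{\Phi(U)}$; the reverse inclusion is immediate from $U \subseteq D$. Passing to the associated projective varieties yields $\overline{J^\circ_{a_1,\ldots,a_r}} = J_{a_1,\ldots,a_r}$. I expect the only genuinely substantive point to be the identification of the complete intersection locus as a nonempty open set---specifically, verifying that the expected-codimension condition is equivalent to being a regular sequence (which uses the Cohen--Macaulayness of $\calS$) and that this locus is nonempty for the given degree sequence; the remaining steps are formal.
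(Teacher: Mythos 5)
Your proof is correct and follows essentially the same route as the paper: both arguments reduce to the fact that the complete-intersection (regular-sequence) locus is a nonempty open, hence dense, subset of $\calS_{a_1}\times\cdots\times\calS_{a_r}$, and then push this density forward through the parametrization $(g_1,h_1,\ldots,g_r,h_r)\mapsto\sum_{i=1}^r g_ih_i$. The paper phrases the final step as an explicit one-parameter degeneration $f_s=\sum_i(g_i+su_i)h_i\to f$ whereas you invoke $\Phi(\overline{U})\subseteq\overline{\Phi(U)}$; these are interchangeable, and your added detail on openness (incidence variety plus Cohen--Macaulayness) and nonemptiness (monomials in distinct variables, using $r<n$) simply makes explicit what the paper leaves as "being a regular sequence is an open condition."
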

\begin{proof}
Let $[f]\in\bbP\calS_d$ be a form which admits a strength decomposition $f = \sum_{i=1}^r g_ih_i$ with $\deg(g_i) = a_i$. It is enough to show that $f\in\overline{J^\circ_{a_1,\ldots,a_r}}$. Consider general forms $(u_1,\ldots,u_r) \in \calS_{a_1}\times\cdots\times\calS_{a_r}$. By generality, since $r \leq n$, the $u_i$'s form a regular sequence. Since being a regular sequence is an open condition in the Zariski topology, there exists an $\epsilon > 0$ such that
\[
(su_1+ g_1,\ldots, su_r+ g_r)\in \calS_{a_1}\times\cdots\times\calS_{a_r} \quad \text{ is a regular sequence for all } s \in (0,\epsilon]\cap\mathbb{Q}.
\]
For $s\in (0,\epsilon]\cap\mathbb{Q}$, define $f_s := \sum_{i=1}^r (g_i+su_i)h_i\in J^{\circ}_{a_1,\ldots,a_r}$. Then $\lim_{s\to0} f_s = f$ and hence $f\in\overline{J^\circ_{a_1,\ldots,a_r}}$. 
\end{proof}
\end{samepage}

\begin{lemma}\label{lemma:bound1}
We have $\dim J_{a_1,\ldots,a_r} \leq \dim \CI_n(a_1,\ldots,a_r) + \binom{n+d}{d} - \coeff_d\left(\frac{\prod_{i=1}^r (1-t^{a_i})}{(1-t)^{n+1}}\right)-1$.
\end{lemma}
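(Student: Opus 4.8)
The plan is to use the density statement of Lemma~\ref{lemma:dense}, which gives $\dim J_{a_1,\ldots,a_r} = \dim J^\circ_{a_1,\ldots,a_r}$, and to bound $\dim J^\circ_{a_1,\ldots,a_r}$ by realizing it as the image of an incidence variety fibered over the space of complete intersections. Concretely, I would introduce
\[
\calI := \left\{ (Z,[f]) \in \CI_n(a_1,\ldots,a_r)\times\bbP\calS_d \,\middle|\, f\in (I_Z)_d \right\},
\]
where $(I_Z)_d$ denotes the degree-$d$ part of the homogeneous ideal of the complete intersection $Z$, together with its two projections $\pi_1\colon\calI\to\CI_n(a_1,\ldots,a_r)$ and $\pi_2\colon\calI\to\bbP\calS_d$. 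The point of working over $\CI_n(a_1,\ldots,a_r)$, rather than over tuples $(g_1,\ldots,g_r)$, is that it eliminates the overcounting coming from different choices of the $h_i$ and of the generators $g_i$: if $Z$ is cut out by a regular sequence $g_1,\ldots,g_r$ with $\deg g_i=a_i$, then $(I_Z)_d=\{\sum_{i=1}^r g_ih_i \mid h_i\in\calS_{d-a_i}\}$, so the image $\pi_2(\calI)$ is exactly $J^\circ_{a_1,\ldots,a_r}$.

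The key computation is the fiber dimension of $\pi_1$. For any $Z\in\CI_n(a_1,\ldots,a_r)$ the fiber is the projective linear space $\bbP(I_Z)_d$, and since the $g_i$ form a regular sequence the Hilbert series of $\calS/I_Z$ is the complete intersection series $\prod_{i=1}^r(1-t^{a_i})/(1-t)^{n+1}$. Hence
\[
\dim_{\Bbbk}(I_Z)_d = \binom{n+d}{d} - \coeff_d\!\left(\frac{\prod_{i=1}^r(1-t^{a_i})}{(1-t)^{n+1}}\right),
\]
and this value is independent of $Z$. Consequently every fiber of $\pi_1$ has the same dimension $N := \binom{n+d}{d} - \coeff_d\!\left(\prod_{i=1}^r(1-t^{a_i})/(1-t)^{n+1}\right)-1$, and all fibers are nonempty since $a_i\leq d/2<d$ forces $(I_Z)_d\neq0$.

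I would then conclude with the elementary fiber-dimension inequality: for a morphism whose fibers all have dimension at most $N$, the source has dimension at most $\dim(\text{base})+N$. Applying this to $\pi_1$ gives $\dim\calI \leq \dim\CI_n(a_1,\ldots,a_r)+N$, and since the image of a morphism cannot increase dimension, Lemma~\ref{lemma:dense} yields
\[
\dim J_{a_1,\ldots,a_r} = \dim\overline{\pi_2(\calI)} \leq \dim\calI \leq \dim\CI_n(a_1,\ldots,a_r)+N,
\]
which is precisely the claimed bound. The hard part is not the formal dimension count but the single structural input that makes it tight: the invariance of the fiber dimension, i.e.\ that the codimension of $(I_Z)_d$ depends only on the degrees $a_1,\ldots,a_r$ and not on the chosen complete intersection. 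This is exactly where the complete intersection Hilbert series enters, and it is also the reason the estimate is stated in terms of $\coeff_d$ of that series. A secondary point to verify is that $(I_Z)_d$ is genuinely exhausted by the products $\sum_i g_ih_i$, which holds because the ideal of a complete intersection is generated in the expected degrees, so no separate saturation argument is needed.
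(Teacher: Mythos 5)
Your argument is correct and is essentially identical to the paper's: your incidence variety $\calI$ is exactly the projective bundle $E$ the authors construct over $\CI_n(a_1,\ldots,a_r)$, with the same fiber-dimension computation via the complete intersection Hilbert series, the same use of Lemma~\ref{lemma:dense}, and the same conclusion from surjectivity onto $J^\circ_{a_1,\ldots,a_r}$. No gaps.
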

\begin{proof}
If $I = (g_1,\ldots,g_r) \subseteq \calS$ is an ideal defined by a regular sequence of degrees $a_1,\ldots,a_r$, then
\[
\dim\,(\calS/I)_d = \coeff_d\left(\frac{\prod_{i=1}^r (1-t^{a_i})}{(1-t)^{n+1}}\right). 
\]
Hence
\[
\dim\,(g_1,\ldots,g_r)_d = \binom{n+d}{d} - \coeff_d\left(\frac{\prod_{i=1}^r (1-t^{a_i})}{(1-t)^{n+1}}\right) =: N + 1.
\]
From Lemma~\ref{lemma:dense}, we derive that $\dim J^{\circ}_{a_1,\ldots,a_r} = \dim J_{a_1,\ldots, a_r}$. 
Now, let $E$ be the projective bundle on $\CI_n(a_1,\ldots,a_r)$ whose fiber at a point $Y\in \CI_n(a_1,\ldots,a_r)$ is the projective space $\bbP(I_Y)_d\cong \bbP^N$. Then 
\[
\dim E = \dim \CI_n(a_1,\ldots,a_r) +N.
\]
We consider the morphism $E \longrightarrow J^{\circ}_{a_1,\ldots,a_r}$ given by $\left(Y, f\right)\mapsto f$. This map is surjective by definition of $E$ and $J^{\circ}_{a_1,\ldots,a_r}$. Thus
\[
\dim J_{a_1,\ldots,a_r} = \dim J^{\circ}_{a_1,\ldots,a_r}  \leq \dim E = \dim \CI_n(a_1,\ldots,a_r) +N,
\]
which gives the desired upper bound. 
\end{proof}

\begin{samepage}
Now, we compute the dimension of $\CI_n(a_1,\ldots,a_r)$. 

\begin{remark}\label{rmk:deformation}
The Hilbert polynomial $P_{a_1,\ldots,a_r}(t)$ of a complete intersection is uniquely determined by the degrees defining it since it is computed from the Koszul complex. In \cite[Section 4.6.1]{ser}, it is shown that $\CI_n(a_1,\ldots,a_r)$ is parametrized by a Zariski-open subset of $\Hilb_{P_{a_1,\ldots,a_r}(t)}(\bbP^n)$. The latter  
is smooth at $[Y] \in \CI_n(a_1,\ldots,a_r)$ and \cite[Theorem 4.3.5]{ser} yields
\[
T_{[Y]}\Hilb_{P_{a_1,\ldots,a_r}(t)}(\bbP^n) = H^0(N_{Y/\bbP^n}).
 \]
So $\dim \CI_n(a_1,\ldots,a_r) = h^0(N_{Y/\bbP^n})$, i.e., the dimension of the space of global sections of the normal bundle of $Y$.
\remarkend
\end{remark}
\end{samepage}

\begin{proposition}\label{prop:CI}
We have
\[
\dim \CI_n(a_1,\ldots,a_r) = \sum_{i=1}^r \coeff_{a_i} \left(\frac{\prod_{i=1}^r (1-t^{a_i})}{(1-t)^{n+1}}\right).
\]
\end{proposition}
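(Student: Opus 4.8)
The plan is to invoke Remark~\ref{rmk:deformation} to replace the dimension of $\CI_n(a_1,\ldots,a_r)$ by $h^0(N_{Y/\bbP^n})$ for a suitable complete intersection $Y$, and then to evaluate this cohomology by combining the splitting of the normal bundle of a complete intersection with the projective normality of $Y$. Concretely, fix a general, hence smooth, complete intersection $Y \subseteq \bbP^n$ of type $(a_1,\ldots,a_r)$, cut out by a regular sequence $g_1,\ldots,g_r$ with $\deg g_i = a_i$, and set $I = (g_1,\ldots,g_r)$. By Remark~\ref{rmk:deformation}, $\dim\CI_n(a_1,\ldots,a_r) = h^0(N_{Y/\bbP^n})$. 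Since $Y$ is a complete intersection, its conormal sheaf $\calI_Y/\calI_Y^2$ is locally free on the classes of $g_1,\ldots,g_r$, so the normal bundle splits as $N_{Y/\bbP^n} \cong \bigoplus_{i=1}^r \calO_Y(a_i)$; hence $h^0(N_{Y/\bbP^n}) = \sum_{i=1}^r h^0(Y,\calO_Y(a_i))$.

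The core of the argument is the identity $h^0(Y,\calO_Y(a)) = \dim(\calS/I)_a$ for every $a \geq 0$. To establish it, I would take cohomology in the ideal sheaf sequence $0 \to \calI_Y(a) \to \calO_{\bbP^n}(a) \to \calO_Y(a) \to 0$. Here the hypothesis $r < n$ is essential: it forces $\dim Y = n - r \geq 1$, so that $Y$ is a positive-dimensional complete intersection and $\calS/I$ is Cohen--Macaulay of Krull dimension $n + 1 - r \geq 2$. Consequently $\calS/I$ has depth at least $2$, which on the one hand makes $I$ saturated and on the other gives $H^1(\calI_Y(a)) = 0$ for all $a$. The first fact identifies the kernel of the restriction map $\calS_a = H^0(\calO_{\bbP^n}(a)) \to H^0(\calO_Y(a))$ with $I_a$, and the second makes this map surjective; together they produce the isomorphism $(\calS/I)_a \xrightarrow{\ \sim\ } H^0(Y,\calO_Y(a))$.

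With this in hand the proof concludes quickly: as recalled in the proof of Lemma~\ref{lemma:bound1}, the Hilbert series of $\calS/I$ equals $\prod_{j=1}^r (1 - t^{a_j})/(1-t)^{n+1}$, so $\dim(\calS/I)_{a_i} = \coeff_{a_i}\bigl(\prod_{j=1}^r (1 - t^{a_j})/(1-t)^{n+1}\bigr)$, and summing over $i$ yields the asserted formula. The step I expect to be the main obstacle is precisely the projective normality input $h^0(\calO_Y(a_i)) = \dim(\calS/I)_{a_i}$: it is exactly where positive-dimensionality of $Y$ (equivalently $r < n$) is indispensable, since for a zero-dimensional complete intersection the restriction maps fail to be surjective in low degrees and one would instead find $h^0(\calO_Y(a_i)) = \deg Y = \prod_j a_j$. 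If one preferred to avoid quoting projective normality, an alternative route would be to compute both $H^0(\calO_Y(a_i))$ and the vanishing $H^1(\calI_Y(a_i)) = 0$ directly from the Koszul resolution of $\calO_Y$, broken into short exact sequences and tracked degree by degree.
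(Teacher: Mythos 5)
Your argument is correct and follows essentially the same route as the paper: reduce to $h^0(N_{Y/\bbP^n})$ via Remark~\ref{rmk:deformation}, split the normal bundle as $\bigoplus_i \calO_Y(a_i)$, and identify $h^0(\calO_Y(a_i))$ with the Hilbert function of $\calS/I_Y$ using the ideal sheaf sequence. The only (harmless) divergence is that you derive the surjectivity of the restriction maps from the depth-$\geq 2$ property of the Cohen--Macaulay ring $\calS/I$ rather than citing projective normality of smooth complete intersections as in \cite[Exercise~II.8.4]{h}; your observation that $r<n$ is exactly what makes this work is a correct and worthwhile clarification.
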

\begin{proof}
Let $Y \in \CI_n(a_1,\ldots,a_r)$ be a general point. By Remark \ref{rmk:deformation}, $\dim\CI_n(a_1,\ldots,a_r) = h^0(N_{Y/\bbP^n})$. 
Since $Y$ is a complete intersection, its normal bundle is $N_{Y/\bbP^n} = \bigoplus_{i=1}^r \calO_{Y}(a_i)$.
Hence, the statement follows from the following equality: 
\[
h^0(\calO_Y(a_i)) = \coeff_{a_i}\left(\frac{\prod_{i=1}^r (1-t^{a_i})}{(1-t)^{n+1}}\right).
\]
To see that this equality holds, first notice that $Y$ is projectively normal \cite[Exercise~II.8.4]{h}, because it is a smooth complete intersection, by the generality assumption. 
So, for all $k\geq 0$, the restriction map $H^0(\calO_{\bbP^n}(k)) \rightarrow H^0(\calO_Y(k))$ is surjective. From the long exact sequence in cohomology of the short exact sequence \[0 \rightarrow \calI_Y(k) \rightarrow \calO_{\bbP^n}(k) \rightarrow \calO_Y(k) \rightarrow 0,\] one has $h^1(\calI_Y(k)) = 0$ for all $k\geq 0$.  Since $\HF_{\calS/I_Y}(d) = \coeff_d\left(\frac{\prod_{i=1}^r (1-t^{a_i})}{(1-t)^{n+1}}\right)$, where $I_Y$ is the homogeneous ideal of $Y$, the claimed equality follows.
\end{proof}

\begin{lemma}\label{lemma:equality}
For integers $e\geq0$ and $b_1,\ldots,b_s\geq1$, we have the following identity:
\[
\coeff_e\left(\frac{\prod_{i=1}^s (1-t^{b_i})}{(1-t)^{n+1}}\right) = \sum_{I \subseteq \{1,\ldots,s\}}(-1)^{\#I} \binom{n+e-\sum_{i \in I} b_i}{n}.
\]
Here $\binom{a}{b} = 0$ whenever $a < b$. 
\end{lemma}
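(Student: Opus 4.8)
The plan is to prove the identity by a direct generating-function computation, separately expanding the denominator and numerator and then extracting the coefficient of $t^e$.

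First I would recall the standard power series expansion
\[
\frac{1}{(1-t)^{n+1}} = \sum_{k\geq0}\binom{n+k}{n}t^k,
\]
so that the coefficient of $t^k$ in $1/(1-t)^{n+1}$ equals $\binom{n+k}{n}$ for every $k\geq0$. Next I would expand the numerator by distributing the product over the two terms in each factor $(1-t^{b_i})$: choosing the summand $-t^{b_i}$ exactly for the indices $i$ in a subset $I\subseteq\{1,\ldots,s\}$ and the summand $1$ for the remaining indices yields
\[
\prod_{i=1}^s (1-t^{b_i}) = \sum_{I \subseteq \{1,\ldots,s\}}(-1)^{\#I}\, t^{\sum_{i\in I}b_i}.
\]

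Multiplying these two expansions, I would obtain
\[
\frac{\prod_{i=1}^s (1-t^{b_i})}{(1-t)^{n+1}} = \sum_{I \subseteq \{1,\ldots,s\}}(-1)^{\#I}\, t^{\sum_{i\in I}b_i}\sum_{k\geq0}\binom{n+k}{n}t^k,
\]
and then extract the coefficient of $t^e$. For a fixed subset $I$, the term $t^{\sum_{i\in I}b_i}$ contributes to $t^e$ precisely through the index $k = e - \sum_{i\in I}b_i$, giving the summand $(-1)^{\#I}\binom{n+e-\sum_{i\in I}b_i}{n}$. When $\sum_{i\in I}b_i > e$ there is no such nonnegative $k$; this is exactly the case $n+e-\sum_{i\in I}b_i < n$, in which the stated convention $\binom{a}{b}=0$ for $a<b$ makes the corresponding binomial coefficient vanish, so these subsets may be harmlessly included in the sum. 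Collecting the contributions over all $I$ yields the claimed formula.

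Since both steps are elementary manipulations of formal power series, I do not expect a genuine obstacle here; the only point requiring a little care is to verify that the vanishing convention on binomial coefficients correctly accounts for subsets $I$ with $\sum_{i\in I}b_i>e$, which is what allows the right-hand side to range over \emph{all} subsets rather than only those with $\sum_{i\in I}b_i\leq e$.
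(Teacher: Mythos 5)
Your argument is correct and is exactly the standard computation the paper has in mind when it leaves this lemma to the reader: expand $1/(1-t)^{n+1}$ as $\sum_{k\geq0}\binom{n+k}{n}t^k$, expand the numerator by inclusion--exclusion over subsets $I$, and extract the coefficient of $t^e$, with the convention $\binom{a}{b}=0$ for $a<b$ absorbing the subsets with $\sum_{i\in I}b_i>e$. Nothing further is needed.
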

\begin{proof}
Left to the reader.
\end{proof}

\begin{theorem}\label{thm:upper_bound}
Let $r<n$ and $a_1,\ldots,a_r\leq d/2$ be positive integers and take $\ell_{d/2} := \#\{i \mid a_i = d/2\}$. Then
\begin{align*}\label{eq:upper_bound}
\dim J_{a_1,\ldots,a_r} &\leq \binom{n+d}{d} - \coeff_d\left(\frac{\prod_{i=1}^r(1-t^{a_i})(1-t^{d-a_i})}{(1-t)^{n+1}}\right) + \binom{\ell_{d/2}}{2} - 1.
\end{align*}
When $d\geq3$, $a_1=\ldots=a_r=1$ and $r<\slrk_{d,n}^{\circ}$, equality holds.
\end{theorem}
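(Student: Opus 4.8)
The plan is to combine the upper bound of Lemma~\ref{lemma:bound1} with the dimension formula of Proposition~\ref{prop:CI} and then reorganize the resulting expression into the claimed form by a generating-function manipulation. Writing $P(t) = \prod_{i=1}^r(1-t^{a_i})$, Lemma~\ref{lemma:bound1} and Proposition~\ref{prop:CI} together give
\[
\dim J_{a_1,\ldots,a_r} \leq \sum_{i=1}^r \coeff_{a_i}\!\left(\frac{P(t)}{(1-t)^{n+1}}\right) + \binom{n+d}{d} - \coeff_d\!\left(\frac{P(t)}{(1-t)^{n+1}}\right) - 1.
\]
So it suffices to show that the first and third terms on the right combine into $\binom{\ell_{d/2}}{2}$ minus the degree-$d$ coefficient of $P(t)\prod_{i=1}^r(1-t^{d-a_i})/(1-t)^{n+1}$.

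The key step is to expand $\prod_{i=1}^r(1-t^{d-a_i})$ modulo $t^{d+1}$. Since each $a_i \leq d/2$, every factor $t^{d-a_i}$ has degree $d-a_i \geq d/2$, so any product of two such factors has degree $(d-a_i)+(d-a_j) = 2d - a_i - a_j \geq d$, with equality precisely when $a_i = a_j = d/2$. Hence
\[
\prod_{i=1}^r(1-t^{d-a_i}) \equiv 1 - \sum_{i=1}^r t^{d-a_i} + \binom{\ell_{d/2}}{2}\,t^d \pmod{t^{d+1}},
\]
the coefficient of $t^d$ counting exactly the pairs $i<j$ with $a_i = a_j = d/2$. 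Multiplying by $P(t)/(1-t)^{n+1}$, extracting the degree-$d$ coefficient, and using the shift identities $\coeff_d(t^{d-a_i} S) = \coeff_{a_i}(S)$ and $\coeff_d(t^d S) = \coeff_0(S) = 1$ then yields
\[
\coeff_d\!\left(\frac{P(t)\prod_{i=1}^r(1-t^{d-a_i})}{(1-t)^{n+1}}\right) = \coeff_d\!\left(\frac{P(t)}{(1-t)^{n+1}}\right) - \sum_{i=1}^r\coeff_{a_i}\!\left(\frac{P(t)}{(1-t)^{n+1}}\right) + \binom{\ell_{d/2}}{2}.
\]
Rearranging this identity and substituting into the bound above produces exactly the inequality in the statement; since this last step is an equality, no loss is incurred beyond that already present in Lemma~\ref{lemma:bound1}.

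For the equality assertion I would specialize to $a_1 = \cdots = a_r = 1$. As $d \geq 3$, we have $\ell_{d/2} = 0$, so the correction term vanishes, while $J_{1,\ldots,1} = \sigma_r(X_1)$. Here $P(t) = (1-t)^r$, so the relevant series is $(1-t^{d-1})^r/(1-t)^{n+1-r}$. Because $d \geq 3$ forces $2(d-1) > d$, one has $(1-t^{d-1})^r \equiv 1 - r\,t^{d-1} \pmod{t^{d+1}}$, and extracting the degree-$d$ coefficient gives
\[
\coeff_d\!\left(\frac{(1-t^{d-1})^r}{(1-t)^{n+1-r}}\right) = \binom{n-r+d}{d} - r(n+1-r).
\]
Thus the upper bound equals $\binom{n+d}{d} - \binom{n-r+d}{d} + r(n+1-r) - 1$, which by Remark~\ref{rmk:general_slice} is precisely $\dim\sigma_r(X_1)$ when $r < \slrk_{d,n}^\circ$; hence equality holds. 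The main point requiring care is the modular expansion of $\prod_{i=1}^r(1-t^{d-a_i})$ and the correct accounting of the degree-$d$ contribution, which is exactly where the binomial correction $\binom{\ell_{d/2}}{2}$ enters.
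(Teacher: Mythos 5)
Your proof is correct and, for the main inequality, follows essentially the same route as the paper: combine Lemma~\ref{lemma:bound1} with Proposition~\ref{prop:CI} and verify the coefficient identity by determining which products of the factors $t^{d-a_i}$ survive in degree $d$. Your truncation $\prod_{i=1}^r(1-t^{d-a_i}) \equiv 1 - \sum_i t^{d-a_i} + \binom{\ell_{d/2}}{2}t^d \pmod{t^{d+1}}$ is just a cleaner repackaging of the paper's application of Lemma~\ref{lemma:equality} with $s=2r$, where subsets $I\subseteq\{1,\ldots,2r\}$ are sorted by their intersection with $\{r+1,\ldots,2r\}$; the $\binom{\ell_{d/2}}{2}$ correction arises identically in both. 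The one genuinely different step is the equality case: the paper computes $\dim J_{1,\ldots,1}$ directly via Terracini's lemma \eqref{eq:tangent_join} and the Hochster--Laksov theorem on ideals of general forms, whereas you instead show that the upper bound numerically agrees with the classically known value of $\dim\sigma_r(X_1)$ from Remark~\ref{rmk:general_slice}. Both are legitimate (and the hypothesis $r<\slrk_{d,n}^\circ$ is exactly what Remark~\ref{rmk:general_slice} requires), though the paper's version is self-contained in the sense that it reproves the dimension of $\sigma_r(X_1)$ rather than quoting it.
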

\begin{proof}
First, we consider the case where $d\geq3$, $a_1=\ldots=a_r=1$. In this case, by \eqref{eq:tangent_join}, it is enough to compute the codimension of $(\ell_1,\ldots,\ell_r,g_1,\ldots,g_r)_d$ which corresponds to 
\[
\dim \calS_d/(\ell_1,\ldots,\ell_r,g_1,\ldots,g_r)_d = \dim \calS'_d/(\bar{g_1},\ldots,\bar{g_r})_d
\]
where $\calS' \cong \calS/(\ell_1,\ldots,\ell_r)$ is a polynomial ring in $n+1-r$ variables and $\bar{g_i}$ is the class of $g_i$ in $\calS'$. Since the $g_i$ are general of degree $d-1$, the latter dimension is obtained by \cite[Theorem 1]{hl} which states that
\[
\codim_{\PP\calS_d} J_{a_1,\ldots,a_r} = \coeff_d\left(\frac{(1-t^{d-1})^r}{(1-t)^{n+1-r}}\right).
\]
For the first statement, by Lemma \ref{lemma:bound1} and Proposition \ref{prop:CI}, it is enough to prove that
\begin{align*}
\sum_{j=1}^r \coeff_{a_j}  \left(\frac{\prod_{i=1}^r (1-t^{a_i})}{(1-t)^{n+1}}\right) &+ \binom{n+d}{d} -\coeff_{d} \left(\frac{\prod_{i=1}^r (1-t^{a_i})}{(1-t)^{n+1}}\right) - 1 = 
\\ & \binom{n+d}{d} - \coeff_d\left(\frac{\prod_{i=1}^r(1-t^{a_i})(1-t^{d-a_i})}{(1-t)^{n+1}}\right) + \binom{\ell_{d/2}}{2} - 1
\end{align*}
or, equivalently, to prove that
\begin{equation}\label{eq:rhsVSlhs}
\coeff_d\left(\frac{\prod_{i=1}^r(1-t^{a_i})(1-t^{d-a_i})}{(1-t)^{n+1}}\right) = \coeff_{d} \left(\frac{\prod_{i=1}^r (1-t^{a_i})}{(1-t)^{n+1}}\right) -\sum_{j=1}^r \coeff_{a_j} \left(\frac{\prod_{i=1}^r (1-t^{a_i})}{(1-t)^{n+1}}\right) + \binom{\ell_{d/2}}{2}.
\end{equation}
We analyze both sides of this equality. For the left hand side, we use Lemma \ref{lemma:equality} with $e = d$, $s = 2r$ and $(b_i,b_{r+i}) = (a_i,d-a_i)$ for $i = 1,\ldots,r$.
Since $a_i \leq d/2$ for all $i$, the summand corresponding to subset $I \subseteq \{1,\ldots,2r\}$ is zero whenever the intersection $I\cap\{r+1,\ldots,2r\}$ has more than two elements. The remaining summands correspond to subsets $I$ such that $I\subseteq\{1,\ldots,r\}$, $I=I'\cup \{r+j\}$ for $I'\subseteq\{1,\ldots,r\}$ and $j\in\{1,\ldots,r\}$ or $I=I'\cup\{r+j,r+k\}$ for $I'\subseteq\{1,\ldots,r\}$ and distinct $j,k\in\{1,\ldots,r\}$. In the last case, the summand is zero unless $a_j=a_k=d/2$ and $I'=\emptyset$. So we get
\[
\sum_{I \subseteq \{1,\ldots,r\}}(-1)^{\#I}\binom{n+d-\sum_{i \in I}a_i}{n} + \sum_{j=1}^r \sum_{I' \subseteq \{1,\ldots,r\}}(-1)^{\#I'+1}\binom{n+a_j-\sum_{i \in I'}a_i}{n} +  \binom{\ell_{d/2}}{2}.
\]
For the right hand side of \eqref{eq:rhsVSlhs}, we use Lemma \ref{lemma:equality} with $s = r$ and $b_i = a_i$ for $i = 1,\ldots,r$ and varying $e$. We get
\[
\sum_{I \subseteq \{1,\ldots,r\}}(-1)^{\#I}\binom{n+d-\sum_{i \in I}a_i}{n} - \sum_{j=1}^r \sum_{I \subseteq \{1,\ldots,r\}}(-1)^{\#I}\binom{n+a_j-\sum_{i \in I}a_i}{n} +  \binom{\ell_{d/2}}{2}.
\]
Hence \eqref{eq:rhsVSlhs} holds.
\end{proof}\vspace{-15pt}

\section{Numerical computations}\label{sec:proof}

Fix an integer $d\geq 5$. Let $n,r\geq 1$ and $1\leq a_1,\ldots,a_r\leq d/2$ be integers such that $r<\slrk_{d,n}^{\circ}$. Our goal is to prove that\vspace{-2pt}
\[
\dim J_{a_1,\ldots,a_r} \leq \dim \sigma_r (X_1)
\]
holds, and that we have equality if and only if $a_1=\ldots=a_r=1$. Write $\ell_j:=\#\{i\in\{1,\ldots,r\}\mid a_i=j\}$ for all $j\in\bbR$. By Theorem~\ref{thm:upper_bound}, it suffices to prove that, for fixed $n,r$, the value of
\[
F(a_1,\ldots,a_r):=\coeff_d\left(\frac{\prod_{i=1}^r(1-t^{a_i})(1-t^{d-a_i})}{(1-t)^{n+1}}\right) - \binom{\ell_{d/2}}{2}=\coeff_d\left(\frac{\prod_{i=1}^r(1-t^{a_i})}{(1-t)^{n+1}}\left(1-\sum_{i=1}^r t^{d-a_i}\right)\right)
\]
is minimal exactly when $a_1=\ldots=a_r=1$. We first prove that $F(a_1,\ldots,a_r)$ goes down when replacing all $a_i>2$ by~$2$. Afterwards, we deal with the cases where $a_1,\ldots,a_r\in\{1,2\}$. Take $\theta:=\max\{a_1,\ldots,a_r\}\leq d/2$. \phantom{test}\vspace*{-10pt}

\subsection{The case \texorpdfstring{$\theta>2$}{theta>2}} 
Write $P_k:=1+t+\ldots+t^k$ for $k\geq0$ and $P_{\infty}:=1/(1-t)$. 

\begin{lemma}\label{lem:increasing}
Let $s,\ell,k_1,\ldots,k_s\geq 0$ be integers. Then the coefficients of the power series
\[
P_{\infty}^{\ell+1} P_{k_1}\cdots P_{k_s}\vspace{-5pt}
\]
form a weakly increasing series.
\end{lemma}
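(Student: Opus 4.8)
The plan is to prove that the power series $P_\infty^{\ell+1} P_{k_1}\cdots P_{k_s}$ has weakly increasing coefficients by induction on the number $s$ of finite factors $P_{k_i}$. The key structural observation is that multiplying a power series by $P_\infty = 1/(1-t)$ turns it into the series of \emph{partial sums} of its coefficients, while multiplying by $P_k$ takes a window-sum of width $k+1$. So the entire problem reduces to tracking what these two operations do to the property ``weakly increasing coefficients.''

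\medskip

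\noindent\textbf{Base case.} When $s=0$ the series is $P_\infty^{\ell+1} = 1/(1-t)^{\ell+1}$, whose coefficient of $t^m$ is $\binom{m+\ell}{\ell}$. Since $\binom{m+\ell}{\ell}$ is weakly increasing in $m$ (for fixed $\ell\geq 0$), the base case holds. It will in fact be cleaner to isolate the single essential lemma: if a power series $A(t)=\sum_m c_m t^m$ with $c_m\geq 0$ for all $m$ has weakly increasing coefficients, then so does $P_\infty \cdot A(t)$ \emph{and} so does $P_k\cdot A(t)$ for every $k\geq 0$.

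\medskip

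\noindent\textbf{Inductive step.} I would first establish the two closure claims. For $P_\infty\cdot A$: writing $P_\infty A = \sum_m (\sum_{j\leq m} c_j)\, t^m$, the coefficient of $t^m$ is $S_m := \sum_{j=0}^m c_j$, and $S_m - S_{m-1} = c_m \geq 0$, so the partial sums are weakly increasing regardless of monotonicity of the $c_j$ — we only need $c_m\geq 0$. For $P_k\cdot A$: the coefficient of $t^m$ is $c_m + c_{m-1}+\cdots+c_{m-k}$ (with $c_j=0$ for $j<0$), and the consecutive difference is $(c_m + \cdots + c_{m-k}) - (c_{m-1}+\cdots + c_{m-k-1}) = c_m - c_{m-k-1}$; this is $\geq 0$ precisely because $A$ has weakly increasing coefficients and $m \geq m-k-1$. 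Note $P_\infty A$ needs only nonnegativity, while $P_k A$ needs monotonicity; both are preserved, and nonnegativity of coefficients is obviously preserved by both operations since all factors have nonnegative coefficients. With these closure claims in hand, the induction is immediate: peel off one factor $P_{k_s}$, apply the inductive hypothesis to $P_\infty^{\ell+1}P_{k_1}\cdots P_{k_{s-1}}$ (which has nonnegative, weakly increasing coefficients), and conclude via the $P_{k_s}$-closure claim.

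\medskip

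\noindent I do not expect a serious obstacle here, as this is essentially a bookkeeping argument about the difference operator. The one point requiring mild care is keeping straight which property ($\,c_m\geq 0$ versus weak monotonicity) is needed for each operation and verifying that both properties propagate through the induction simultaneously; this is why I would state the closure lemma for the pair of conditions (nonnegative \emph{and} weakly increasing) rather than monotonicity alone. Should one prefer to avoid induction, an equivalent route is to expand each $P_{k_i}$ as a finite sum of powers of $t$ and observe that the whole product is a nonnegative combination of shifts of $P_\infty^{\ell+1}$; since each shift $t^a/(1-t)^{\ell+1}$ has weakly increasing coefficients and a nonnegative sum of weakly increasing nonnegative series is weakly increasing, the claim follows. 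Either formulation reduces to the same elementary monotonicity of binomial coefficients.
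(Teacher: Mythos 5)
Your proposal is correct and follows essentially the same route as the paper: the base case $s=0$ via the binomial expansion of $P_\infty^{\ell+1}$, followed by induction on $s$ using the closure claim that multiplying a nonnegative weakly increasing series by $P_k$ preserves weak monotonicity (via the telescoping difference $c_m - c_{m-k-1}$). The paper's proof is just a terser version of the same argument, so no further comparison is needed.
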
\vspace{-15pt}
\begin{proof}
We have\vspace{-5pt}
\[
P_{\infty}^{\ell+1}=\sum_{k=0}^\infty\binom{\ell+k}{k}t^k
\]
and so the lemma holds when $s=0$. When $f$ is a series whose coefficients increase weakly and $k\geq 0$ is an integer, then the same holds for the series $fP_k$. Hence the lemma holds for all $s$ using induction.
\end{proof}

We will often apply the next lemma with $g=P_a$ and $h=P_b$, where $a\geq b\geq0$ are integers.

\begin{lemma}
Let $f,g,h$ be series whose coefficients are all nonnegative and suppose that $\coeff_k(g)\geq\coeff_k(h)$ for all $k\geq0$. Then $\coeff_k(fg)\geq\coeff_k(fh)$ for all $k\geq0$.
\end{lemma}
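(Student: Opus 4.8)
The plan is to reduce the claim to a pointwise comparison of convolution coefficients, since the hypotheses are stated entirely in terms of the coefficient sequences. First I would fix notation by writing $f=\sum_{i\geq0}f_it^i$, $g=\sum_{j\geq0}g_jt^j$ and $h=\sum_{j\geq0}h_jt^j$, so that the standing assumptions read $f_i\geq0$, $g_j\geq0$, $h_j\geq0$ and $g_j\geq h_j$ for all indices $i,j\geq0$.

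Next I would recall that the coefficient of $t^k$ in a product of two power series is given by the Cauchy product (discrete convolution), so that
\[
\coeff_k(fg)=\sum_{j=0}^k f_{k-j}g_j \qquad\text{and}\qquad \coeff_k(fh)=\sum_{j=0}^k f_{k-j}h_j.
\]
Subtracting these two expressions term by term gives
\[
\coeff_k(fg)-\coeff_k(fh)=\sum_{j=0}^k f_{k-j}\,(g_j-h_j).
\]
Each summand is a product of the nonnegative number $f_{k-j}$ with the nonnegative number $g_j-h_j$, so it is nonnegative, and hence so is the whole finite sum. This yields $\coeff_k(fg)\geq\coeff_k(fh)$ for every $k\geq0$, which is exactly the assertion.

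There is essentially no obstacle in this argument, and I do not expect any hard step: the convolution defining each coefficient is a finite sum indexed by $0\leq j\leq k$, so no convergence questions intervene and the termwise comparison is legitimate at the purely formal level of power series. The only point worth stating explicitly is that nonnegativity of $f$ is genuinely used, since it is what allows the individual inequalities $f_{k-j}(g_j-h_j)\geq0$ to be summed without cancellation.
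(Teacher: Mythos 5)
Your proof is correct: expanding the Cauchy product and comparing termwise, using nonnegativity of the coefficients of $f$ to preserve the inequality under summation, is exactly the right argument. The paper states this lemma without proof, treating it as immediate, and your write-up supplies precisely the routine verification that was omitted.
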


\begin{theorem}\label{thm:theta>2}
Assume that $a_r=\theta>2$. Then $F(a_1,\ldots,a_r)>F(a_1,\ldots,a_{r-1},a_r-1)$.
\end{theorem}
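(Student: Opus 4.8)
The plan is to prove the strict inequality by computing the finite difference $F(a_1,\dots,a_{r-1},\theta)-F(a_1,\dots,a_{r-1},\theta-1)$ in closed form and checking positivity. Write $Q:=\prod_{i=1}^{r-1}(1-t^{a_i})$ and $R:=1-\sum_{i=1}^{r-1}t^{d-a_i}$, so that for $a\in\{\theta-1,\theta\}$ one has $F(a_1,\dots,a_{r-1},a)=\coeff_d\big(\tfrac{Q(1-t^a)}{(1-t)^{n+1}}(R-t^{d-a})\big)$. Expanding $(1-t^a)(R-t^{d-a})=R-t^{d-a}-t^aR+t^d$, the $t^d$-term contributes $1$ to $\coeff_d$ independently of $a$ and so cancels in the difference. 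Subtracting the two expressions and factoring the bracket as $(R-t^{d-\theta}-t^\theta R)-(R-t^{d-\theta+1}-t^{\theta-1}R)=(1-t)\big(t^{\theta-1}R-t^{d-\theta}\big)$ cancels one power of $(1-t)$, leaving
\[
F(a_1,\dots,a_{r-1},\theta)-F(a_1,\dots,a_{r-1},\theta-1)=\coeff_{d-\theta+1}(W)-\coeff_\theta(W)-\sum_{i=1}^{r-1}\coeff_{a_i-\theta+1}(W),\qquad W:=\frac{Q}{(1-t)^{n}}.
\]

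Next I would exploit monotonicity of the coefficients of $W$. Using $1-t^{a_i}=(1-t)P_{a_i-1}$ gives $W=\prod_{i=1}^{r-1}P_{a_i-1}\big/(1-t)^{\,n-r+1}$, which has the shape covered by Lemma~\ref{lem:increasing} (note $n-r+1\geq1$ since $r<n$); hence the coefficients of $W$ are nonnegative and weakly increasing, with $\coeff_0(W)=1$. Because $\theta$ is the maximum, every index $a_i-\theta+1$ is at most $1$, so a summand is $\coeff_1(W)$ when $a_i=\theta$, equals $\coeff_0(W)=1$ when $a_i=\theta-1$, and vanishes otherwise ($W$ has no negative-degree terms); thus the subtracted sum is exactly $\alpha\,\coeff_1(W)+\beta$, where $\alpha:=\#\{i<r\mid a_i=\theta\}$ and $\beta:=\#\{i<r\mid a_i=\theta-1\}$. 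Since $\theta\le d/2$ forces $d-\theta+1>\theta$, monotonicity already yields $\coeff_{d-\theta+1}(W)\ge\coeff_\theta(W)$, so the whole statement reduces to the strict domination
\[
\coeff_{d-\theta+1}(W)-\coeff_\theta(W)>\alpha\,\coeff_1(W)+\beta.
\]

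To control the left-hand side I would rewrite the difference as a block of consecutive coefficients of $V:=(1-t)W=\prod_{i=1}^{r-1}P_{a_i-1}\big/(1-t)^{\,n-r}$, namely $\coeff_{d-\theta+1}(W)-\coeff_\theta(W)=\sum_{m=\theta+1}^{d-\theta+1}\coeff_m(V)$, a sum of $d-2\theta+1\ge1$ terms whose coefficients are again weakly increasing by Lemma~\ref{lem:increasing}; hence each term is at least $\coeff_{\theta+1}(V)$. To bound $\coeff_{\theta+1}(V)$ from below I would discard all factors except the $\alpha$ copies of $P_{\theta-1}$ coming from the indices with $a_i=\theta$ (legitimate by the preceding comparison lemma applied with $g=P_{a_i-1}$ and $h=P_0=1$, as every factor has constant term $1$), reducing to $\coeff_{\theta+1}\!\big(P_{\theta-1}^{\,\alpha}/(1-t)^{\,n-r}\big)$, and likewise retain the $\beta$ copies of $P_{\theta-2}$ to absorb the additive $\beta$. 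The target is then to show these explicit binomial quantities outweigh $\alpha\,\coeff_1(W)+\beta$, using $\coeff_1(W)=(n-r+1)+\#\{i<r\mid a_i\ge2\}\le n$ and $\alpha+\beta\le r-1$.

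The step I expect to be the main obstacle is precisely this last domination in the tight regime $\theta=d/2$, where $d-2\theta+1=1$ collapses the block to the single increment $\coeff_{\theta+1}(V)$ and essentially no slack remains: one must then prove $\coeff_{\theta+1}(V)>\alpha\,\coeff_1(W)+\beta$ with the two sides of genuinely comparable size, so the estimate cannot be wasteful. This is the delicate heart of the argument, and it is where I expect the hypothesis $r<\slrk_{d,n}^\circ$ (rather than merely $r<n$) to be indispensable, constraining how large $\alpha$ and $\beta$ can be relative to the growth of the coefficients of $V$. Away from the boundary, when $\theta<d/2$ and hence $d-2\theta+1\ge2$, the surplus of increments provides ample room and the inequality should follow directly from the monotonicity established above.
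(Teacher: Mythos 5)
Your reduction of the theorem to a coefficient inequality is correct and is, in substance, identical to the paper's own first step: your $W$ is exactly the series $f=\prod_{i=1}^{r-1}(1-t^{a_i})/(1-t)^n$ used there, your closed form for the difference equals the paper's $\coeff_{d-\theta+1}(f)-\coeff_\theta(f)-\ell_{\theta-1}-(\ell_\theta-1)\coeff_1(f)$ (with your $\alpha=\ell_\theta-1$, $\beta=\ell_{\theta-1}$), and your passage to $V=(1-t)W$ together with Lemma~\ref{lem:increasing} is the paper's introduction of $g=f(1-t^{d-2\theta+1})$ in telescoped form. The problem is that you stop exactly where the proof begins: the strict inequality $\sum_{k=\theta+1}^{d-\theta+1}\coeff_k(V)>\alpha\,\coeff_1(W)+\beta$ \emph{is} the theorem, and you explicitly defer it as ``the delicate heart of the argument.'' In the paper this step occupies most of the proof: one must keep (degraded versions of) the non-extremal factors --- replacing $P_{a_i-1}$ by $P_1$ for $2\leq a_i<\theta$ and $P_\infty^{n-r}$ by $P_\infty P_1^{n-r-1}$, so that the lower bound still grows with $m=\coeff_1(W)=n-\ell_1$, which appears on the right-hand side multiplied by $\alpha$ --- then expand $(1-t^\theta)^{\ell_\theta-1}$ to reduce everything to the explicit inequality \eqref{eq:prooftheta>2}, prove that by a binomial estimate for $m\geq10$, and dispose of $m\leq 9$ by a finite computer check. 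None of this is soft, and your sketched substitute of discarding all factors except $P_{\theta-1}^{\alpha}P_{\theta-2}^{\beta}(1-t)^{-(n-r)}$ throws away precisely the source of $m$-growth that is needed to dominate $\alpha m$.

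Two further points confirm that the deferred step cannot be waved through. First, your assertion that for $\theta<d/2$ the inequality ``should follow directly from the monotonicity established above'' is unsupported: having $d-2\theta+1\geq2$ summands each at least $\coeff_{\theta+1}(V)$ does not by itself beat $\alpha m+\beta$, since $m$ can be large while $\coeff_{\theta+1}(V)$ stays small once the middle factors are discarded; the paper makes no such case distinction and needs the full estimate in both regimes. Second, the target inequality really is on a knife's edge: for $d=6$, $n=4$, $r=3<\slrk^\circ_{6,4}=4$ and $(a_1,a_2,a_3)=(1,3,3)$ one computes directly that $F(1,3,3)=F(1,3,2)=5$, so the desired strict inequality degenerates to an equality in this instance (a case covered by \cite{BO}, but formally within the stated hypotheses). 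This shows that no uniform estimate with slack of the kind you outline can close the argument, and that the final numerical comparison must be carried out explicitly. As it stands, the proposal reproduces the routine half of the paper's proof and leaves the essential half as a conjecture.
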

\begin{proof}
Take
\[
f := \frac{\prod_{i=1}^{r-1}(1-t^{a_i})}{(1-t)^n}.
\]
Then we have
\[
F(a_1,\ldots,a_r)=\coeff_d\left(\frac{\prod_{i=1}^r(1-t^{a_i})}{(1-t)^{n+1}}\left(1-\sum_{i=1}^r t^{d-a_i}\right)\right)=\coeff_d\left(fP_{\theta-1}\left(1-\sum_{i=1}^r t^{d-a_i}\right)\right)
\]
and similarly
\[
F(a_1,\ldots,a_{r-1},a_r-1)=\coeff_d\left(fP_{\theta-2}\left(\left(1-\sum_{i=1}^r t^{d-a_i}\right)+t^{d-\theta}(1-t)\right)\right).
\]
We need to show that the difference
\[
\coeff_d\left(fP_{\theta-1}\left(1-\sum_{i=1}^r t^{d-a_i}\right)\right)-\coeff_d\left(fP_{\theta-2}\left(\left(1-\sum_{i=1}^r t^{d-a_i}\right)+t^{d-\theta}(1-t)\right)\right)
\]
is positive. 
This difference equals
\begin{align*}
\coeff_d\left(f\left(t^{\theta-1}\left(1-\sum_{i=1}^rt^{d-a_i}\right)-t^{d-\theta}+t^{d-1}\right)\right)&=\coeff_{d-\theta+1}(f)-\ell_{\theta-1}-(\ell_{\theta}-1)\coeff_1(f)-\coeff_{\theta}(f)\\
&=\coeff_{d-\theta+1}(f(1-t^{d-2\theta+1}))-\ell_{\theta-1}-(\ell_{\theta}-1)(n-\ell_1).
\end{align*}
Take
\[
g:=P_{\infty}^{n-r}P_{d-2\theta}\prod_{i=1}^{r-1}P_{a_i-1} =\frac{\prod_{i=1}^{r-1}(1-t^{a_i})}{(1-t)^n}(1-t^{d-2\theta+1})=f(1-t^{d-2\theta+1}).
\]
By Lemma \ref{lem:increasing}, the coefficients of $g$ are weakly increasing. So 
\[
\coeff_{d-\theta+1}(g)\geq \coeff_{\theta+1}(g).
\]
Write $m=n-\ell_1$. As $\ell_1+\ldots+\ell_{\theta}=r<\slrk_{d,n}^\circ\leq n$, we have $m>\ell_2+\ldots+\ell_{\theta}$. Note that
\begin{align*}
\coeff_{\theta+1}(g)&\geq \coeff_{\theta+1}\left(P_{\infty}^{n-r}P_{d-2\theta}P_1^{r-\ell_1-\ell_\theta}P_{\theta-1}^{\ell_\theta-1}\right)\\
&\geq \coeff_{\theta+1}\left(P_{\infty}P_1^{m-\ell_\theta-1}P_{\theta-1}^{\ell_\theta-1}\right)\\
&=\coeff_{\theta+1}\left(P_{\infty}^{\ell_\theta}P_1^{m-\ell_\theta-1}(1-t^\theta)^{\ell_\theta-1}\right)\\
&=\coeff_{\theta+1}\left(P_{\infty}^{\ell_\theta}P_1^{m-\ell_\theta-1}\right)-(\ell_\theta-1)(m-1)\\
&\geq \coeff_4\left(P_{\infty}^{\ell_\theta}P_1^{m-\ell_\theta-1}\right)-(\ell_\theta-1)(m-1)
\end{align*}
So it suffices to prove that
\begin{equation}\label{eq:prooftheta>2}
\coeff_4\left(P_{\infty}^{\ell_\theta}P_1^{m-\ell_\theta-1}\right)> \ell_{\theta-1}+(\ell_\theta-1)(2m-1)
\end{equation}
for all $\ell_{\theta-1}\geq0$, $\ell_{\theta}\geq1$ and $m>\ell_{\theta-1}+\ell_{\theta}$. Note that
\[
\ell_{\theta-1}+(\ell_\theta-1)(2m-1)\leq (m-\ell_\theta-1)+(\ell_\theta-1)(2m-1)=2\ell_\theta(m-1)-m.
\]
We have 
\[
\coeff_4\left(P_{\infty}^{\ell_\theta}P_1^{m-\ell_\theta-1}\right)\geq \coeff_4\left(P_{\infty}P_1^{m-2}\right)=\sum_{k=0}^4\binom{m-2}{k}
\]
which is strictly greater than $2(m-1)(m-1)-m\geq 2\ell_\theta(m-1)-m$ for $m\geq 10$. This leaves the case $m\leq 9$, where we verified that \eqref{eq:prooftheta>2} holds by computer.
This finishes the proof.
\end{proof}

By Theorem~\ref{thm:theta>2}, it suffices to focus on the cases where $a_1,\ldots,a_r\in\{1,2\}$. In these cases, we will regard $F(a_1,\ldots,a_r)$ as a function $A_{\ell_1,\ell_2}$ (defined below) depending only on $\ell_1$ and $\ell_2$.

\subsection{The case  \texorpdfstring{$\theta=2$}{theta=2}}

Recall that $d\geq5$. We define
\begin{align*}
A_{\ell_1,\ell_2}&:=\coeff_d\left(\frac{(1-t)^{\ell_1}(1-t^2)^{\ell_2}}{(1-t)^{n+1}}\left(1-\ell_1t^{d-1}-\ell_2t^{d-2}\right)\right)\mbox{ for $\ell_1,\ell_2\geq0$,}\\
B_{\ell_1,\ell_2}&:= A_{\ell_1-1,\ell_2+1}-A_{\ell_1,\ell_2}\mbox{ for $\ell_1\geq 1$ and $\ell_2\geq0$,}\\
C_{\ell_1,\ell_2}&:= B_{\ell_1-1,\ell_2+1}-B_{\ell_1,\ell_2}\mbox{ for $\ell_1\geq 2$ and $\ell_2\geq0$,}\\
D_{\ell_1,\ell_2}&:= C_{\ell_1-1,\ell_2+1}-C_{\ell_1,\ell_2}\mbox{ for $\ell_1\geq 3$ and $\ell_2\geq0$ and}\\
E_{\ell_1,\ell_2}&:= D_{\ell_1-1,\ell_2+1}-D_{\ell_1,\ell_2}\mbox{ for $\ell_1\geq 4$ and $\ell_2\geq0$.}
\end{align*}
The goal of this subsection is to prove the following theorem.

\begin{theorem}\label{thm:12->11}
We have $A_{\ell_1,\ell_2}>A_{\ell_1+\ell_2,0}$ for all integers $\ell_1\geq0$ and $\ell_2\geq1$ such that $\ell_1+\ell_2<\slrk_{d,n}^{\circ}$. 
\end{theorem}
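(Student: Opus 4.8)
The plan is to exploit that the shift $\delta\colon(\ell_1,\ell_2)\mapsto(\ell_1-1,\ell_2+1)$ underlying the definitions of $B,C,D,E$ preserves $N:=n+1-\ell_1-\ell_2$. Using $(1-t^2)^{\ell_2}=(1-t)^{\ell_2}(1+t)^{\ell_2}$, along each antidiagonal $\{\ell_1+\ell_2=m\}$ (so $N$ is constant) one has the clean form
\[
A_{\ell_1,\ell_2}=\coeff_d\left(\frac{(1+t)^{\ell_2}}{(1-t)^N}\left(1-\ell_1t^{d-1}-\ell_2t^{d-2}\right)\right).
\]
Since $A_{\ell_1+\ell_2,0}$ and $A_{\ell_1,\ell_2}$ lie on the same antidiagonal, by telescoping it suffices to show that $\ell_2\mapsto A_{m-\ell_2,\ell_2}$ is strictly increasing, i.e. that $B_{\ell_1,\ell_2}>0$ whenever $\ell_1\ge1$ and $m=\ell_1+\ell_2<\slrk_{d,n}^\circ$.

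The key computation is to evaluate the iterated differences by tracking the polynomial multiplying $G_{\ell_2}:=(1+t)^{\ell_2}/(1-t)^N$; applying $\delta$ multiplies $G_{\ell_2}$ by $(1+t)$ and shifts the correction term. Carrying this out modulo $t^{d+1}$, I expect the multipliers $t-t^{d-2}-\ell_2t^{d-1}+(1-\ell_1)t^d$ for $B$, then $t^2-2t^{d-1}-\ell_2t^d$ for $C$, then $t^3-3t^d$ for $D$, and finally $t^4$ for $E$. Thus the corrections die after exactly four steps, yielding the identity $E_{\ell_1,\ell_2}=\coeff_{d-4}(G_{\ell_2})$ and, more generally, $\delta^jA=\coeff_{d-j}(G_{\ell_2})$ for all $j\ge4$. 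Because $G_{\ell_2}$ has nonnegative coefficients and $d-4\ge1$, this gives $E\ge0$ and, in fact, shows every difference of order $\ge4$ is nonnegative—this is the structural reason the hierarchy stops at $E$.

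From $E\ge0$ I run the monotonicity chain along the antidiagonal: $E\ge0$ forces $D\ge D_{m,0}$, then with $D_{m,0}\ge0$ one gets $D\ge0$, hence $C\ge C_{m,0}\ge0$, hence $B\ge B_{m,0}$. (Equivalently, Newton's forward difference formula gives $A_{m-k,k}-A_{m,0}=\binom{k}{1}B_{m,0}+\binom{k}{2}C_{m,0}+\binom{k}{3}D_{m,0}+\sum_{j\ge4}\binom{k}{j}\coeff_{d-j}(G_0)$, which closes the argument uniformly, including small $m$.) Everything therefore reduces to the three corner values
\[
D_{m,0}=\binom{N+d-4}{d-3}-3,\qquad C_{m,0}=\binom{N+d-3}{d-2}-2N,\qquad B_{m,0}=\binom{N+d-2}{d-1}-\binom{N+1}{2}+1-m.
\]
Here $D_{m,0}\ge0$ and $C_{m,0}\ge0$ are elementary binomial estimates for $d\ge5$, $N\ge2$: for instance $C_{m,0}=N(N+5)(N-2)/6\ge0$ when $d=5$, and both only grow as $d$ increases.

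The main obstacle is the strict inequality $B_{m,0}>0$, and this is the single place where the hypothesis $\ell_1+\ell_2<\slrk_{d,n}^\circ$ enters. By definition of $\slrk_{d,n}^\circ$, the bound $m<\slrk_{d,n}^\circ$ is equivalent to $m(n+1-m)<\binom{n-m+d}{d}$, i.e. $mN<\binom{N+d-1}{d}$, so $m<\frac1N\binom{N+d-1}{d}=\frac{N+d-1}{Nd}\binom{N+d-2}{d-1}$. Substituting this into $B_{m,0}$, proving $B_{m,0}>0$ reduces to the clean estimate
\[
\frac{(N-1)(d-1)}{Nd}\binom{N+d-2}{d-1}+1\ge\binom{N+1}{2}\qquad(N\ge2,\ d\ge5).
\]
I expect this to be the delicate point: the estimate is tight, holding with equality exactly at $(N,d)=(2,5)$, so it must be proved carefully—by induction on $N$ and $d$, or by comparing both sides as polynomials in $N$ for each fixed $d\ge5$—and it is precisely the strictness of $m<\slrk_{d,n}^\circ$ that rescues the borderline case and delivers $B_{m,0}>0$.
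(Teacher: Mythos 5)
Your proposal follows essentially the same route as the paper's proof: the same iterated antidiagonal differences $B,C,D,E$, the same closed forms (your $(1+t)^{\ell_2}/(1-t)^N$ is exactly the paper's $P_\infty^{m+1-\ell_2}P_1^{\ell_2}$ with $N=m+1-\ell_2$, and your multiplier polynomials reproduce parts (a)--(e) of the paper's lemma), the same monotonicity chain reducing everything to $E\geq 0$, $D_{m,0}\geq0$, $C_{m,0}\geq0$ and $B_{m,0}>0$, and the same observation that the hypothesis $\ell_1+\ell_2<\slrk_{d,n}^\circ$ is needed only for the last of these; all the formulas and reductions you state check out, and your Newton forward-difference packaging is a slightly cleaner way to organize the telescoping. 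The one step you do not actually carry out is the final estimate $\frac{(N-1)(d-1)}{Nd}\binom{N+d-2}{d-1}+1\geq\binom{N+1}{2}$ for $N\geq2$, $d\geq5$; it is true, and tight exactly at $(N,d)=(2,5)$ as you predict, but it is precisely where the remaining work lies. The paper handles the equivalent statement by expanding $(m+d-1)\cdots(m+2)(d-1)-\tfrac{d!}{2}(m+3)$ as a polynomial in $m=N-1$, checking that the coefficients in positive degree are positive, and evaluating at $m=1$ to obtain $\tfrac{d!}{2}(d-5)\geq0$; supplying that (or your proposed induction) is all that is missing from your argument.
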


We write $m=n-\ell_1$ and we assume that $\ell_1+\ell_2<n$. So $\ell_2<m$. In particular, we have $m\geq 1$.

\begin{lemma}
Let $\ell_1,\ell_2\geq0$ be integers such that $\ell_1+\ell_2<n$.
\begin{itemize}
\item[(a)] We have
\[
A_{\ell_1,\ell_2}=\coeff_d(P_{\infty}^{m+1-\ell_2}P_1^{\ell_2})-\ell_2\binom{m+2}{2}-\ell_1(m+1)+\ell_2^2.
\]
\item[(b)] When $\ell_1\geq1$, we have
\[
B_{\ell_1,\ell_2}=\coeff_{d-1}(P_{\infty}^{m+1-\ell_2}P_1^{\ell_2})-\binom{m+2}{2}-\ell_2m-\ell_1+1.
\]
\item[(c)] When $\ell_1\geq2$, we have
\[
C_{\ell_1,\ell_2}=\coeff_{d-2}(P_{\infty}^{m+1-\ell_2}P_1^{\ell_2})-2(m+1)-\ell_2.
\]
\item[(d)] When $\ell_1\geq3$, we have
\[
D_{\ell_1,\ell_2}=\coeff_{d-3}(P_{\infty}^{m+1-\ell_2}P_1^{\ell_2})-3.
\]
\item[(e)] When $\ell_1\geq4$, we have
\[
E_{\ell_1,\ell_2}=\coeff_{d-4}(P_{\infty}^{m+1-\ell_2}P_1^{\ell_2}).
\]
\end{itemize}
\end{lemma}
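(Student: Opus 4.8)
The plan is to first rewrite the rational function inside the $\coeff_d$ of the definition of $A_{\ell_1,\ell_2}$ into the shape appearing on the right-hand sides. Using $1-t^2=(1-t)(1+t)$ together with $m=n-\ell_1$ gives
\[
\frac{(1-t)^{\ell_1}(1-t^2)^{\ell_2}}{(1-t)^{n+1}}=\frac{(1+t)^{\ell_2}}{(1-t)^{m+1-\ell_2}}=P_{\infty}^{m+1-\ell_2}P_1^{\ell_2}=:G,
\]
so that $A_{\ell_1,\ell_2}=\coeff_d(G)-\ell_1\coeff_1(G)-\ell_2\coeff_2(G)$, since multiplying $G$ by $t^{d-1}$ (resp.\ $t^{d-2}$) carries its degree-$d$ coefficient to the degree-$1$ (resp.\ degree-$2$) coefficient. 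Part (a) then follows from the two elementary evaluations $\coeff_1(G)=m+1$ and $\coeff_2(G)=\binom{m+2}{2}-\ell_2$, which I would read off by multiplying the expansions of $(1+t)^{\ell_2}$ and $(1-t)^{-(m+1-\ell_2)}$ and collecting the low-degree terms.

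For parts (b)--(e) the engine is a single shift observation. Passing from the index $(\ell_1,\ell_2)$ to $(\ell_1-1,\ell_2+1)$, as in each difference definition, replaces $m=n-\ell_1$ by $m+1$ and $\ell_2$ by $\ell_2+1$; crucially this leaves the exponent $m+1-\ell_2$ of $P_{\infty}$ unchanged while raising the exponent of $P_1$ by one, so the power series $G$ is simply multiplied by $P_1=1+t$. Since $\coeff_k((1+t)H)-\coeff_k(H)=\coeff_{k-1}(H)$ for any series $H$, each application of the difference operator lowers the degree index of the $\coeff$-term by exactly one: $\coeff_d\rightsquigarrow\coeff_{d-1}\rightsquigarrow\coeff_{d-2}\rightsquigarrow\coeff_{d-3}\rightsquigarrow\coeff_{d-4}$ as we pass from $A$ to $B$ to $C$ to $D$ to $E$. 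This immediately produces the power-series part of every right-hand side.

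It remains to track the polynomial correction terms, which I would handle one level at a time, deriving (b) from (a), then (c) from (b), and so on: substitute the formula from the previous part at the shifted index $(\ell_1-1,\ell_2+1)$ and subtract the formula at $(\ell_1,\ell_2)$, collapsing the result through elementary identities such as $\binom{m+3}{2}=\binom{m+2}{2}+(m+2)$. In this way the correction becomes $-\binom{m+2}{2}-\ell_2m-\ell_1+1$ for (b), then $-2(m+1)-\ell_2$ for (c), then $-3$ for (d), and finally $0$ for (e). I do not anticipate any genuine difficulty; the one place demanding care --- and the single spot where a sign or off-by-one slip would propagate --- is remembering that $m$ itself increases by one under the shift, so the binomial $\binom{m+2}{2}$ and the linear terms in $m$ and $\ell_1$ must be re-expanded at the shifted value \emph{before} subtracting. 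With that bookkeeping carried out honestly, all five identities fall out by direct computation.
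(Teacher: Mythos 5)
Your proposal is correct and simply carries out the direct calculation that the paper dismisses as ``straightforward'': the identification $\frac{(1-t)^{\ell_1}(1-t^2)^{\ell_2}}{(1-t)^{n+1}}=P_{\infty}^{m+1-\ell_2}P_1^{\ell_2}$, the low-degree coefficients $\coeff_1=m+1$ and $\coeff_2=\binom{m+2}{2}-\ell_2$, and the inductive differencing all check out. The observation that the shift $(\ell_1,\ell_2)\mapsto(\ell_1-1,\ell_2+1)$ fixes the exponent $m+1-\ell_2$ and just multiplies the series by $P_1$ is exactly the right way to organize the bookkeeping, and the correction terms you obtain agree with the stated formulas.
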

\begin{proof}
These calculations are straightforward.
\end{proof}

\begin{lemma}\label{lm:edcba}
Let $\ell_1\geq 1$ and $\ell_2\geq0$ be integers such that $\ell_1+\ell_2<n$.
\begin{itemize}
\item[(a)] When $\ell_1<\slrk_{d,n}^{\circ}$, we have $B_{\ell_1,0}>0$.
\item[(b)] When $\ell_1\geq 2$, we have $C_{\ell_1,\ell_2}\geq 0$.
\item[(c)] When $\ell_1\geq 3$, we have $D_{\ell_1,\ell_2}\geq 0$.
\item[(d)] When $\ell_1\geq 4$, we have $E_{\ell_1,\ell_2}\geq 2$.
\end{itemize}
\end{lemma}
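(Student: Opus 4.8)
The plan is to read off from the preceding lemma that each of $B,C,D,E$ has the shape $\coeff_{d-k}(P_{\infty}^{m+1-\ell_2}P_1^{\ell_2})$ minus an explicit polynomial in $m,\ell_1,\ell_2$, where $m=n-\ell_1$, and then exploit two uniform facts. First, since $\ell_2<m$ we have $m+1-\ell_2\geq2$, so Lemma~\ref{lem:increasing} (with $P_{\infty}^{m+1-\ell_2}=P_{\infty}^{\ell+1}$, $\ell=m-\ell_2\geq0$, and $\ell_2$ factors $P_1$) shows that the coefficients of $P_{\infty}^{m+1-\ell_2}P_1^{\ell_2}$ are weakly increasing; hence $\coeff_{d-k}(\cdots)\geq\coeff_j(\cdots)$ whenever $d-k\geq j$. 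Second, by the comparison lemma above (taking the third series to be $1$), dropping the factor $P_1^{\ell_2}$ cannot increase coefficients, so $\coeff_j(P_{\infty}^{m+1-\ell_2}P_1^{\ell_2})\geq\coeff_j(P_{\infty}^{m+1-\ell_2})=\binom{m-\ell_2+j}{j}$. I will use throughout that $m\geq1$.

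I would prove (d) and (c) directly. For (d), since $d\geq5$ we have $d-4\geq1$, so weak monotonicity gives $E_{\ell_1,\ell_2}=\coeff_{d-4}(P_{\infty}^{m+1-\ell_2}P_1^{\ell_2})\geq\coeff_1(P_{\infty}^{m+1-\ell_2}P_1^{\ell_2})=m+1\geq2$. For (c), since $d-3\geq2$ and $m-\ell_2\geq1$,
\[
D_{\ell_1,\ell_2}=\coeff_{d-3}(P_{\infty}^{m+1-\ell_2}P_1^{\ell_2})-3\geq\coeff_2(P_{\infty}^{m+1-\ell_2})-3=\binom{m-\ell_2+2}{2}-3\geq\binom{3}{2}-3=0.
\]

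For (b) the direct bound is too weak when $\ell_2$ is large (dropping $P_1^{\ell_2}$ loses too much), so instead I would reduce to $\ell_2=0$ using part (c). Since $D_{a,b}=C_{a-1,b+1}-C_{a,b}$ by definition, telescoping the sequence $j\mapsto C_{\ell_1+\ell_2-j,\,j}$ from $j=0$ to $j=\ell_2$ gives
\[
C_{\ell_1,\ell_2}=C_{\ell_1+\ell_2,0}+\sum_{j=0}^{\ell_2-1}D_{\ell_1+\ell_2-j,\,j}.
\]
Each $D$ here has first index $\ell_1+\ell_2-j\geq\ell_1+1\geq3$ and total index $\ell_1+\ell_2<n$, so part (c) applies and every summand is $\geq0$; thus $C_{\ell_1,\ell_2}\geq C_{\ell_1+\ell_2,0}$. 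It then remains to prove the base case $C_{\ell,0}\geq0$ for each integer $2\leq\ell<n$: writing $\mu:=n-\ell\geq1$ and using $d-2\geq3$,
\[
C_{\ell,0}=\coeff_{d-2}(P_{\infty}^{\mu+1})-2(\mu+1)\geq\binom{\mu+3}{3}-2(\mu+1)\geq0,
\]
the last inequality being $(\mu+2)(\mu+3)\geq12$, valid for all $\mu\geq1$.

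Part (a) is the crux and the only place the slice-rank hypothesis is used. Here $\ell_2=0$, and the formula reads $B_{\ell_1,0}=\binom{m+d-1}{d-1}-\binom{m+2}{2}-\ell_1+1$. By Remark~\ref{rmk:general_slice}, the condition $\ell_1<\slrk_{d,n}^{\circ}$ is exactly $\ell_1(m+1)<\binom{m+d}{d}$, i.e.\ $\ell_1<\frac{1}{m+1}\binom{m+d}{d}$. Since $\binom{m+d-1}{d-1}-\binom{m+2}{2}+1$ is an integer, it suffices to prove the real inequality $\frac{1}{m+1}\binom{m+d}{d}\leq\binom{m+d-1}{d-1}-\binom{m+2}{2}+1$; integrality then forces $\ell_1\leq\binom{m+d-1}{d-1}-\binom{m+2}{2}$ and hence $B_{\ell_1,0}\geq1$. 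Using $\binom{m+d}{d}=\tfrac{m+d}{d}\binom{m+d-1}{d-1}$, this rearranges to
\[
\binom{m+d-1}{d-1}\cdot\frac{m(d-1)}{d(m+1)}\geq\binom{m+2}{2}-1.
\]
The main obstacle is this last inequality, which is sharp (equality at $d=5$, $m=1$) and therefore encodes the slice-rank threshold precisely. I would dispatch it by monotonicity in $d$: both $\binom{m+d-1}{d-1}$ and $\tfrac{m(d-1)}{d(m+1)}=\tfrac{m}{m+1}\bigl(1-\tfrac1d\bigr)$ are positive and increasing in $d$, so it is enough to check $d=5$, where clearing denominators reduces it to $m(m+2)(m+3)(m+4)\geq15\,m(m+3)$, equivalently $(m-1)(m+7)\geq0$, which holds for all $m\geq1$.
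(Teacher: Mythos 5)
Your proof is correct, and its overall architecture matches the paper's: parts (b)--(d) rest on the same descending chain (telescoping $C_{\ell_1,\ell_2}\geq C_{\ell_1+\ell_2,0}$ via the nonnegativity of $D$, with the $\ell_2=0$ base case checked by hand), and part (a) is reduced to the inequality $\ell_1(m+1)<\binom{m+d}{d}$ supplied by Remark~\ref{rmk:general_slice}. There are two places where you genuinely diverge. You prove (c) directly from $\binom{m-\ell_2+2}{2}\geq 3$ rather than first reducing to $\ell_2=0$ via (d); this is slightly shorter and equally valid. More substantively, in part (a) the paper multiplies through by $d!$, uses $d!\,\ell_1<(m+d)\cdots(m+2)$, and verifies nonnegativity of the resulting polynomial in $m$ by inspecting its coefficients $c_0,\dots,c_{d-2}$ and evaluating at $m=1$; you instead invoke integrality of $\ell_1$ to pass to a real inequality, note that its left-hand side is increasing in $d$, and check only $d=5$, where it collapses to $(m-1)(m+7)\geq0$. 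Both routes correctly isolate the sharp case $d=5$, $m=1$ (the reason the theorem requires $d\geq5$); yours is arguably the cleaner endgame. A minor remark: the paper's proof of (a) begins by reducing general $\ell_2$ to $\ell_2=0$ via (b), a step that is not needed for the statement as written and that you rightly omit.
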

\begin{proof}
We prove the parts of the lemma in reverse order.

(d).
We have 
$
E_{\ell_1,\ell_2}=\coeff_{d-4}(P_{\infty}^{m+1-\ell_2}P_1^{\ell_2})\geq \coeff_1(P_1^{m+1})=m+1\geq 2.
$

(c).
By (d), we have $D_{\ell_1,\ell_2}\geq D_{\ell_1+\ell_2,0}$. So we may assume that $\ell_2=0$. Now, we have
\[
D_{\ell_1,0}=\coeff_{d-3}(P_{\infty}^{m+1})-3=\binom{m+d-3}{d-3}-3\geq \binom{1+d-3}{d-3}-3=(d-2)-3\geq 0.
\]
(b).
By (c), we have $C_{\ell_1,\ell_2}\geq C_{\ell_1+\ell_2,0}$. So we may assume that $\ell_2=0$. Now, we have
\[
(m+1)\geq 2 \mbox{ and }\frac{(m+d-2)\cdots(m+2)}{(d-2)!}-2\geq \frac{(1+d-2)\cdots(1+2)}{(d-2)!}-2=\frac{d-1}{2}-2\geq0
\]
and so\vspace{-2pt}
\[
C_{\ell_1,0}=\coeff_{d-2}(P_{\infty}^{m+1})-2(m+1)=\binom{m+d-2}{d-2}-2(m+1)=(m+1)\left(\frac{(m+d-2)\cdots(m+2)}{(d-2)!}-2\right)\geq0.
\]
(a).
By (b), $B_{\ell_1,\ell_2}\geq B_{\ell_1+\ell_2,0}$. So we may assume $\ell_2 = 0$. Since $\ell_1<\slrk_{d,n}^{\circ}$, we have $\ell_1(m+1)<\binom{m+d}{d}$. So $d!\ell_1<(m+d)\cdots(m+2)$. We get
\begin{align*}
d!B_{\ell_1,0}&=d!\left(\coeff_{d-1}(P_{\infty}^{m+1})-\binom{m+2}{2}-\ell_1+1\right)\\
&=d!\left(\binom{m+d-1}{d-1}-\frac{m(m+3)}{2}\right)-d!\ell_1\\
&>d!\left(\binom{m+d-1}{d-1}-\frac{m(m+3)}{2}\right)-(m+d)\cdots(m+2)\\
&=d(m+d-1)\cdots(m+1)-\frac{d!}{2}m(m+3)-(m+d)\cdots(m+2)\\
&=(m+d-1)\cdots(m+2)\left(d(m+1)-(m+d)\right)-\frac{d!}{2}m(m+3)\\
&=(m+d-1)\cdots(m+2)(d-1)m-\frac{d!}{2}m(m+3)\\
&=m\left((m+d-1)\cdots(m+2)(d-1)-\frac{d!}{2}(m+3)\right).
\end{align*}\vspace{-2pt}
So it suffices to prove that
\[
c_0+c_1m+\ldots+c_{d-2}m^{d-2}:=(m+d-1)\cdots(m+2)(d-1)-\frac{d!}{2}(m+3)\geq0
\]
We have\vspace{-2pt}
\begin{align*}
c_1&=(d-1)\coeff_1\left((m+d-1)\cdots(m+2)\right)-\frac{d!}{2}\\
&=(d-1)\sum_{i=2}^{d-1}\frac{(d-1)!}{i}-\frac{d!}{2}\\
&=(d-1)!\left(\sum_{i=2}^{d-1}\frac{d-1}{i}-\frac{d}{2}\right)\\
&\geq(d-1)!\left(\frac{d-1}{2}+\frac{d-1}{d-1}-\frac{d}{2}\right)>0
\end{align*}
and $c_i>0$ for $i=2,\ldots,d-3$. Hence
\begin{align*}
c_0+c_1m+\ldots+c_{d-2}m^{d-2}&\geq c_0+c_1+\ldots+c_{d-2}\\
&=(1+d-1)\cdots(1+2)(d-1)-\frac{d!}{2}(1+3)\\
&=\frac{d!}{2}(d-1)-\frac{d!}{2}\cdot 4=\frac{d!}{2}(d-5)\geq0.
\end{align*}\vspace{-2pt}
This finishes the proof.
\end{proof}

Theorem~\ref{thm:12->11} now follows easily.

\begin{proof}[Proof of Theorem~\ref{thm:12->11}]
By parts (a) and (b) of Lemma~\ref{lm:edcba}, we have
\[
A_{\ell_1,\ell_2}-A_{\ell_1+1,\ell_2-1}=B_{\ell_1+1,\ell_2-1}\geq B_{\ell_1+\ell_2,0}>0.
\]
Repeating this, we find that 
\[
A_{\ell_1,\ell_2}>A_{\ell_1+1,\ell_2-1}>\cdots>A_{\ell_1+\ell_2,0}
\]
as desired.
\end{proof}

\subsection{The conclusion of the proof}

\begin{proof}[Proof of Theorem~\ref{thm:main}]
Let $d\geq 5$, $n,r\geq 1$ and $1\leq a_1,\ldots,a_r\leq d/2$ be integers such that $r<\slrk_{d,n}^{\circ}$. We need to show that
\[
\dim J_{a_1,\ldots,a_r} \leq \dim \sigma_r (X_1)
\]
holds, and that we have equality only for $a_1=\ldots=a_r=1$. By Theorem~\ref{thm:upper_bound}, it suffices to prove that 
\[
F(a_1,\ldots,a_r)
\]
is minimal exactly when $a_1=\ldots=a_r=1$. By Theorem~\ref{thm:theta>2}, it suffices to do this in the case where $a_1,\ldots,a_r\in\{1,2\}$. Here, we have $F(a_1,\ldots,a_r)=A_{\ell_1,\ell_2}$ and so the statement holds by Theorem~\ref{thm:12->11}.
\end{proof}

\end{document}